\newcommand{\jo}[1]{}
\newcommand{\si}[1]{#1}
\journalname{Mathematical Programming}
\theoremstyle{plain}
\newtheorem{theorem}{Theorem}
\newtheorem{lemma}{Lemma}
\newtheorem{corollary}{Corollary}
\newtheorem{definition}{Definition}
\newtheorem{remark}{Remark}
\newcommand{\grad}{\nabla}
\def\Lag{\mathcal{L}}
\def\R{\mathbb{R}}
\newcommand{\maxIter}{300}
\newcommand{\vectorOfones}{e}
\newcommand{\minimize}{\mbox{minimize }}
\newcommand{\subjectTo}{\mbox{subject to }}
\newcommand{\figuresDir}[1]{#1} 
\begin{document}
\title{On the behavior of Lagrange multipliers in convex and nonconvex infeasible interior point methods\thanks{Gabriel Haeser was supported by the S\~ao Paulo Research Foundation (FAPESP grants 2013/05475-7 and 2016/02092-8) and the Brazilian National Council for Scientific and Technological Development (CNPq). Oliver Hinder was supported by the PACCAR INC stanford graduate fellowship.}
}

\jo{
\titlerunning{On the behavior of Lagrange multipliers in infeasible interior point methods}        
}

\jo{
\author{
Gabriel Haeser  \and
Oliver Hinder        \and
Yinyu Ye
}
}

\si{
\author{
Gabriel Haeser\thanks{
Department of Applied Mathematics, University of S\~ao Paulo, S\~ao Paulo SP, Brazil. Visiting Scholar at Department of Management Science and Engineering, Stanford University, Stanford CA 94305, USA. E-mail: ghaeser@ime.usp.br.}
\and
Oliver Hinder\thanks{Department of Management Science and Engineering, Stanford University, Stanford CA 94305, USA. E-mail: \{ohinder,yinyu-ye\}@stanford.edu}
\and
Yinyu Ye\footnotemark[3]
}
}

\jo{
\authorrunning{G. Haeser, O. Hinder, Y. Ye} 
\institute{G. Haeser  \at
Department of Applied Mathematics, 
\\Institute of
    Mathematics and Statistics, University of S\~ao Paulo, S\~ao Paulo SP,
    Brazil. Visiting Scholar at Department of Management Science and Engineering, Stanford University, Stanford CA 94305, USA.\\
 \email{ghaeser@ime.usp.br} 
 \and
              O. Hinder \Letter, Y. Ye \at
              Department of Management Science and Engineering, \\
              Stanford University, Stanford, CA 94305, USA.\\
              \email{\{ohinder,yinyu-ye\}@stanford.edu}     
}
}

\jo{
\date{Received: date / Accepted: date}
}
\si{
\date{\today}
}

\maketitle

\begin{abstract}
We analyze sequences generated by interior point methods (IPMs) in convex and nonconvex settings. We prove that moving the primal feasibility at the same rate as the barrier parameter $\mu$ ensures the Lagrange multiplier sequence remains bounded, provided the limit point of the primal sequence has a Lagrange multiplier. This result does not require constraint qualifications. We also guarantee the IPM finds a solution satisfying strict complementarity if one exists. On the other hand, if the primal feasibility is reduced too slowly, then the algorithm converges to a point of minimal complementarity; if the primal feasibility is reduced too quickly and the set of Lagrange multipliers is unbounded, then the norm of the Lagrange multiplier tends to infinity. 

Our theory has important implications for the design of IPMs. Specifically, we show that IPOPT, an algorithm that does not carefully control primal feasibility has practical issues with the dual multipliers values growing to unnecessarily large values. Conversely, the one-phase IPM of \citet*{hinder2018one}, an algorithm that controls primal feasibility as our theory suggests, has no such issue.
\jo{
\keywords{Interior point methods \and Lagrange multipliers \and Complementarity \and Nonlinear optimization \and Convex optimization}
}
\jo{
 \subclass{ 90C25 
 \and 90C30 
 \and 90C46 
 \and 90C51 
 }
  
 }
\end{abstract}

 \section{Introduction}

This paper studies sequences generated by interior point methods (IPMs) that converge to Karush-Kuhn-Tucker (KKT) points of
\begin{subequations}\label{orginal-problem}
\begin{flalign}
\minimize& f(x)\\
\subjectTo &a(x)  + s = 0\\
&s \ge 0,
\end{flalign}
\end{subequations}
where the objective function $f:\R^n\to\R$ and the inequality constraints $a:\R^n\to\R^m$ are continuously differentiable functions. 

The central path generated by sequences of log barrier problems was introduced by \citet{mclinden1980analogue} for convex minimization subject to non-negativity constraints and generalized to linear inequalities by \citet{sonnevend1986analytical}. \citet{megiddo1989pathways} analyzed the path of primal-dual IPMs for linear programming and showed this path converges to a point satisfying strict complementarity. \citet{guler1993convergence} generalized this result to a large class of path-following IPMs for linear programming. Finding a strictly complementary solution is necessary to guarantee the super-linear convergence of IPMs for quadratic programs \cite[Proposition 5.1]{ye1993quadratic}. Furthermore, finding a strictly complementary solution for problems with nonconvex constraints ensures that the critical cone is reduced to a subspace. This subspace gives an efficient way to verify the second-order conditions by computing the least eigenvalue of the Hessian of the Lagrangian restricted to this subspace \cite[Theorem 4.4.2]{bazaraa2006practical}. In the nonlinear context, a strictly complementary solution may not always exist, but if it does, we would like to obtain it.

The results mentioned above implicitly avoid the issue of unbounded dual variables by starting from a strictly feasible point. However, this is rarely done in practice, as infeasible-start algorithms are often used \cite{lustig1990feasibility,mehrotra1992implementation}. \citet{mizuno1995surface} studies the sequences generated by these infeasible start algorithms for linear programming without assuming the existence of an interior point. They show that moving the constraint violation at the same rate as the barrier parameter $\mu$ guarantees that the dual multipliers are bounded. The boundedness of dual multipliers is practically important because the linear system solved at each iteration of an IPM can become poorly conditioned as the dual multipliers get large, making the linear system more difficult to solve, particularly using iterative methods \cite{gondzio2012matrix}. Some of our theoretical contributions can be viewed as extensions of this work to convex and nonconvex optimization. 

One alternative and elegant solution to these issues is the homogeneous algorithm \cite{andersen1998computational,andersen1999homogeneous,ye1994nl}. For convex problems, the homogeneous algorithm is guaranteed to produce a bounded sequence that converges to a maximally complementary solution. For linear programming, this guarantees that if the problem is feasible the algorithm will converge with bounded dual variables. However, it is unknown how to extend the homogeneous algorithm into nonconvex optimization. 
 
While many IPMs for general nonconvex optimization problems have been developed, there is little analysis of the sequences they generate. For example, it is unclear if IPMs can generate maximal complementarity solutions in the presence of nonconvexity. Furthermore, results showing that the sequence of dual iterates are bounded rely on the set of dual multipliers being bounded (which is equivalent to the Mangasarian-Fromovitz constraint qualification \cite{gauvin}). This assumption may be too restrictive because many practical optimization problems may lack a strict relative interior and therefore have an unbounded set of dual multipliers. For instance, we found that this is the case for $64$ out of the $95$ NETLIB problems (see Appendix \ref{app:experiment-details}).

Primal and dual sequences generated by nonconvex optimization algorithms such as IPMs, augmented Lagrangian methods and sequential quadratic programming have been analyzed in a number of works \cite{cpg,andreani2010new,andreani2011sequential,cakkt2,qiwei}. However, we are only aware of feasible IPMs being considered. Moreover, these studies have been focused on determining primal convergence to a KKT point, despite unboundedness of the dual sequence. Instead, we focus on guaranteeing boundedness and maximal complementarity of the dual sequence.

Next, we explain the current state of knowledge of primal and dual sequences generated by IPMs for linear programming. In particular, let $f(x):=g^\mathtt{T}x$, with constraints $a(x)+s=0, s\geq0$, where $a(x):=Mx-p$, $M$ is a matrix and $g, p$ are vectors. Many IPMs for linear programming compute direction $( d_{x}^{k},  d_{y}^{k}, d_{s}^k )$ at each iteration $k$ satisfying
\begin{subequations}\label{eq:lp-same-rate}
\begin{flalign}
M^{T} d_{y}^{k} &= -\eta^{k} ( g + M^{T} y^{k})  \\
M d_{x}^{k} + d_{s}^{k} &= -\eta^{k} (M x^{k} + s^{k} - p) \label{eq:primal-reduction-eta} \\
S^{k} d_{y}^k + Y^{k} d_{s}^{k} + S^{k} y^{k} &=(1 - \eta^{k}) \mu^{k} \vectorOfones,
\end{flalign}
\end{subequations}
where $Y^k$ and $S^k$ are the diagonal matrices defined by $y^k$ and $s^k$, and $\vectorOfones$ is a vector of ones. The values $\mu^{k+1} :=  (1 - \eta^{k}) \mu^{k}$ and $\eta^{k} \in (0,1)$ are chosen, for example, using a predictor-corrector technique \cite{mehrotra1992implementation}, see also \cite[Algorithm 14.3]{nocedal2006numerical}.  The iterates are updated according to $(x^{k+1},y^{k+1},s^{k+1}) \gets (x^k,y^k,s^{k}) + \alpha^k (d_{x}^k,d_{y}^k,d_{s}^k)$, where $\alpha^k \in (0,1]$ is the step size. Methods that choose their iterates in this way reduce the primal feasibility and complementarity at approximately the same rate \cite{lustig1994interior,mehrotra1992implementation,ye1994nl}, which we formalize as follows.
Suppose the IPM converges to an optimal solution as $\mu^{k} \rightarrow 0$. Then 
a subsequence of iterates satisfy $x^{k} \rightarrow x^{*}$, $s^{k} \rightarrow s^{*}$, and:
\begin{subequations}\label{seq-ipm-eq:nice-IPM}
\begin{flalign}
a(x^{*}) + s^{*} &= 0 \label{seq-ipm-eq:final-feasibility}\\
b \mu^{k} \leq s_i^{k} y_i^{k}&\leq c \mu^{k}  \quad \text{for all $i$} \label{seq-ipm-eq:comp-slack} \\
 \ell \mu^{k} \leq a_i(x^{k}) + s_i^{k}  &\leq u \mu^{k} \quad \text{for all $i$} \label{seq-ipm-eq:primal-rate}    \\
\| \grad_x \Lag(x^k,y^k) \| &\leq d \mu^k(\| y^{k} \|_1 + 1) \label{seq-ipm-eq:dual-feas} \\
s^{k}, y^{k} &\ge 0, \label{seq-ipm-eq:dual-feas2}
\end{flalign}
\end{subequations}
where $\mu^{k}>0$ is the barrier parameter, $0<b\leq c$, $0<\ell\leq u$, $d\geq0$ are real constants independent of $k$, $\| \cdot  \|_{p}$ denotes the $\ell_{p}$ norm, $\| \cdot  \|$ the Euclidean norm, and the Lagrangian $\Lag : \R^n\times\R^m \rightarrow \R$ is
\begin{flalign}
\Lag(x, y) := f(x) + y^{T} a(x).
\end{flalign}
Inequality~\eqref{seq-ipm-eq:comp-slack} ensures perturbed complementarity approximately holds. Inequality~\eqref{seq-ipm-eq:primal-rate} guarantees that primal feasibility is reduced at the same rate as complementarity. Inequality~\eqref{seq-ipm-eq:dual-feas} ensures that scaled dual feasibility is reduced fast enough. The set of inequalities~\eqref{seq-ipm-eq:nice-IPM} has a natural interpretation as a `shifted log barrier': a sequence of approximate KKT points to the problem,
\begin{flalign*}
\minimize ~ f(x) - \mu^k \sum_{i=1}^{m} \log(\mu^k r_i - a_i(x)),
\end{flalign*}
with the vector $r$ satisfying $\ell \le r_i \le u$.

The main contribution of \citet*{mizuno1995surface} was to show that IPMs for linear programming have bounded Lagrange multiplier sequences and satisfy strict complementarity when \eqref{seq-ipm-eq:nice-IPM} holds. \citet*{hinder2018one} show it is also possible to develop IPMs that satisfy \eqref{seq-ipm-eq:nice-IPM} even if $f$ and $a$ are nonlinear. In particular, they give an IPM where, if the primal variables are bounded and the algorithm does not return a certificate of local primal infeasibility, a subsequence of the iterates satisfy \eqref{seq-ipm-eq:nice-IPM}. This motivates us to show given a sequence satisfying \eqref{seq-ipm-eq:nice-IPM}, even if the objective and constraints are  \emph{nonlinear} the dual multipliers are still, under general conditions, well-behaved.

\subsection{Summary of contributions}

Now, assuming conditions \eqref{seq-ipm-eq:nice-IPM} and that the problem is convex (or certain sufficient conditions for local optimality hold) we show:

\begin{enumerate}[label=(\alph*)]
\item If there exists a Lagrange multiplier at the point $x^{*}$, then the sequence of Lagrange multipliers approximations $\{ y^{k} \}$ is bounded (see Theorem~\ref{convexbound} and Theorem~\ref{thm:nonconvex-bound-lag} for the convex and nonconvex case respectively).
\item If $y^{k} \rightarrow y^{*}$, then among the set of Lagrange multipliers at the point $x^{*}$, the point $y^{*}$ is maximally complementary (see Theorems~\ref{thm:maxcomp} and \ref{thm:maxcomp-suf}).
\end{enumerate}

Consider the case that \eqref{seq-ipm-eq:primal-rate} does not hold, i.e., the primal feasibility is not being reduced at the same rate as complementarity. We argue that this is poor algorithm design, because if problem \eqref{orginal-problem} is convex then:

\begin{enumerate}[label=(\alph*)]
\item\label{list:issue-1} If we reduce the primal feasibility faster than the barrier parameter $\mu^{k}$ and the set of dual multipliers at the point $x^{*}$ is unbounded, then $\| y^{k} \| \rightarrow \infty$ (see Theorem~\ref{thm:divergence1}).
\item If we reduce the primal feasibility slower than the barrier parameter $\mu^{k}$ and $y^{k} \rightarrow y^{*}$, then $y^{*}$ is a minimally complementary Lagrange multiplier associated with $x^{*}$ (see Theorem~\ref{mincomp}).
\end{enumerate}

Our central claim is that many implemented interior point methods, especially for nonlinear optimization, such as IPOPT \cite{wachter2006implementation}, suffer from the problems described above because they fail to control the rate at which they reduce primal feasibility (specifically IPOPT suffers from deficiency~\ref{list:issue-1}). For linear programs, these methods solve systems of the form \cite[equation (9)]{wachter2006implementation}
\begin{subequations}
\begin{flalign}
M^{T} d_{y}^{k} &= -\left( g + M^{T} y^{k} \right)  \\
M d_{x}^{k} + d_{s}^{k} &= - \left( M x^{k} + s^{k} - p \right) \label{eq:agg-primal} \\
S^{k} d_{y}^{k} + Y^{k} d_{s}^{k} + S^{k} y^{k} &= \mu^{k} \vectorOfones,
\end{flalign}
\end{subequations}
where the notation follows \eqref{eq:lp-same-rate}. Equation~\eqref{eq:agg-primal} aims to reduce the constraint violation to zero at each iteration. Contrast \eqref{eq:agg-primal} with equation~\eqref{eq:primal-reduction-eta} that aims to reduce the constraint violation by $\eta^{k}$, the same amount by which complementarity is reduced. As we demonstrate in Section~\ref{sec:experiments}, a consequence of the implementation choices in IPOPT, primal feasibility is usually reduced faster than complementarity. Therefore, as our theory suggests, these IPMs have issues with the Lagrange multipliers sequence diverging. 

IPOPT attempts to circumvent this issue by perturbing the original constraint $a(x)\leq0$ to create an artificial interior as follows:
\begin{flalign}
\label{ipopt-pert}
a(x) \le \delta \vectorOfones,
\end{flalign}
for some $\delta > 0$ (see Section~3.5 of \cite{wachter2006implementation}). While this technically solves the issue as the theoretical assumptions of \cite{wachter2005line} are now met, it is not an elegant solution and causes undesirable behavior. For example, we show in Section~\ref{sec:experiments} that the dual variable may still {\it spike} before converging. Furthermore, if $\delta$ is selected to be large, the constraints will be only loosely satisfied at the final solution.

We proceed as follows. Section~\ref{sec:simple-example} gives a simple example illustrating the phenomena studied. Section~\ref{sec:lag-bounded} shows that reducing the primal feasibility at the same rate as complementarity ensures the dual multiplier sequence remains bounded and satisfies maximal complementarity. Section~\ref{negative} explains that reducing the constraint violation too quickly causes the dual multiplier sequence to be unbounded, while reducing it too fast causes the them to tend towards a minimal complementarity solution. Section~\ref{sec:experiments} shows empirically how strategies that reduce the constraint violation too fast, such as the one employed by IPOPT, can have issues with extremely large dual multipliers. Section~\ref{conclusions} presents our final remarks.

\subsection{A simple example demonstrating phenomena}\label{sec:simple-example}

Consider the following simple linear programming problem:
\begin{subequations}\label{eq:simple-lp-example}
\begin{flalign}
\minimize& 0\\
\subjectTo& x \le 1\\
& x \ge 1.
\end{flalign}
\end{subequations}
By adding a feasibility perturbation $\delta > 0$ and a log barrier term $\mu\geq0$, we get
\begin{subequations}\label{eq:simple-lp-example2}
\begin{flalign}
\minimize& - \mu \log{(x - 1 + \delta)} - \mu \log{(1 - x + \delta)} \\
\subjectTo& x \ge 1 - \delta \\
& x \le 1 + \delta. \label{eq:simple-lp-example:con2} 
\end{flalign}
\end{subequations}
The associated KKT system is
\begin{subequations}\label{eq:simple-lp-example-ktt}
\begin{flalign}
& y_{1} - y_{2} = 0 \\
& x + s_{1} = 1 + \delta \\
& x - s_{2} = 1 - \delta \\
& s_{1} y_{1} = \mu \quad s_{2} y_{2} = \mu \\
& y_{1}, y_{2} \geq 0 \quad s_{1}, s_{2} \geq 0.
\end{flalign}
\end{subequations}

Observe that the orignal problem (corresponding to $\delta = \mu = 0$) has a unique optimal primal solution at $x^{*} := 1$ and $s^{*}:=(0,0)$, with dual solutions $y_{1}^{*} = y_{2}^{*}$ for any $ y_{2}^{*} \ge 0$. Therefore the set of dual variables is unbounded. However, for any $\delta, \mu > 0$, the solution to system \eqref{eq:simple-lp-example-ktt} is
\begin{flalign*}\label{ex:lagr}
x = 1 \quad s_{1} = \delta \quad s_{2} = \delta \quad  y_{1} = \frac{\mu}{\delta} \quad y_{2} = \frac{\mu}{\delta}. 
\end{flalign*}
From these equations, we can see that if $\delta$ and $\mu$ move at the same rate, then both strict complementarity and boundedness of the dual variables will be achieved. But if $\delta$ reduces faster than $\mu$, i.e., $\delta/\mu \rightarrow 0^+$, then the dual variables sequence is unbounded. Alternatively, if $\delta$ moves slower than $\mu$, i.e., $\delta/\mu \rightarrow \infty$, then strict complementarity will not hold.

Now, if $\delta>0$ is fixed at a small value as in the IPOPT strategy \eqref{ipopt-pert}, the dual sequence will initially grow very fast before stabilizing when the barrier parameter $\mu$ is sufficiently reduced. We confirm this hypothesis by solving the linear programming problem \eqref{eq:simple-lp-example} with perturbations $\delta>0$ using IPOPT, and we compare it with a {\it well-behaved IPM} \cite{hinder2018one} that moves complementarity at the same rate as primal feasibility, that is, satisfies \eqref{seq-ipm-eq:final-feasibility}--\eqref{seq-ipm-eq:dual-feas2}. For this experiment, we turn off IPOPT's native perturbation strategy \eqref{ipopt-pert}. In Figure~\ref{graph:toy-example} we plot the maximum dual variables at each iteration, given by the two methods for different  perturbation sizes. While perturbing the linear program prevents the dual variables of IPOPT from increasing indefinitely, the dual variables still {\it spike}. For example, with $\delta = 10^{-8}$ the maximum dual variable of IPOPT peaks at $10^4$ on iteration 4 before sharply dropping on the next iteration to $9$. Picking a smaller $\delta$, e.g., $\delta = 10^{-5}$, ensures a smaller peak at the cost of solving the problem to a lower accuracy. On the other hand, the maximum dual variable for the well-behaved IPM remains below $1.5$ irrespective of the perturbation size. Furthermore, with $\delta = 0.0$ the curve for the well-behaved IPM is essentially flat. 


\begin{figure}[H]
\centering
\includegraphics[scale=0.6]{\figuresDir{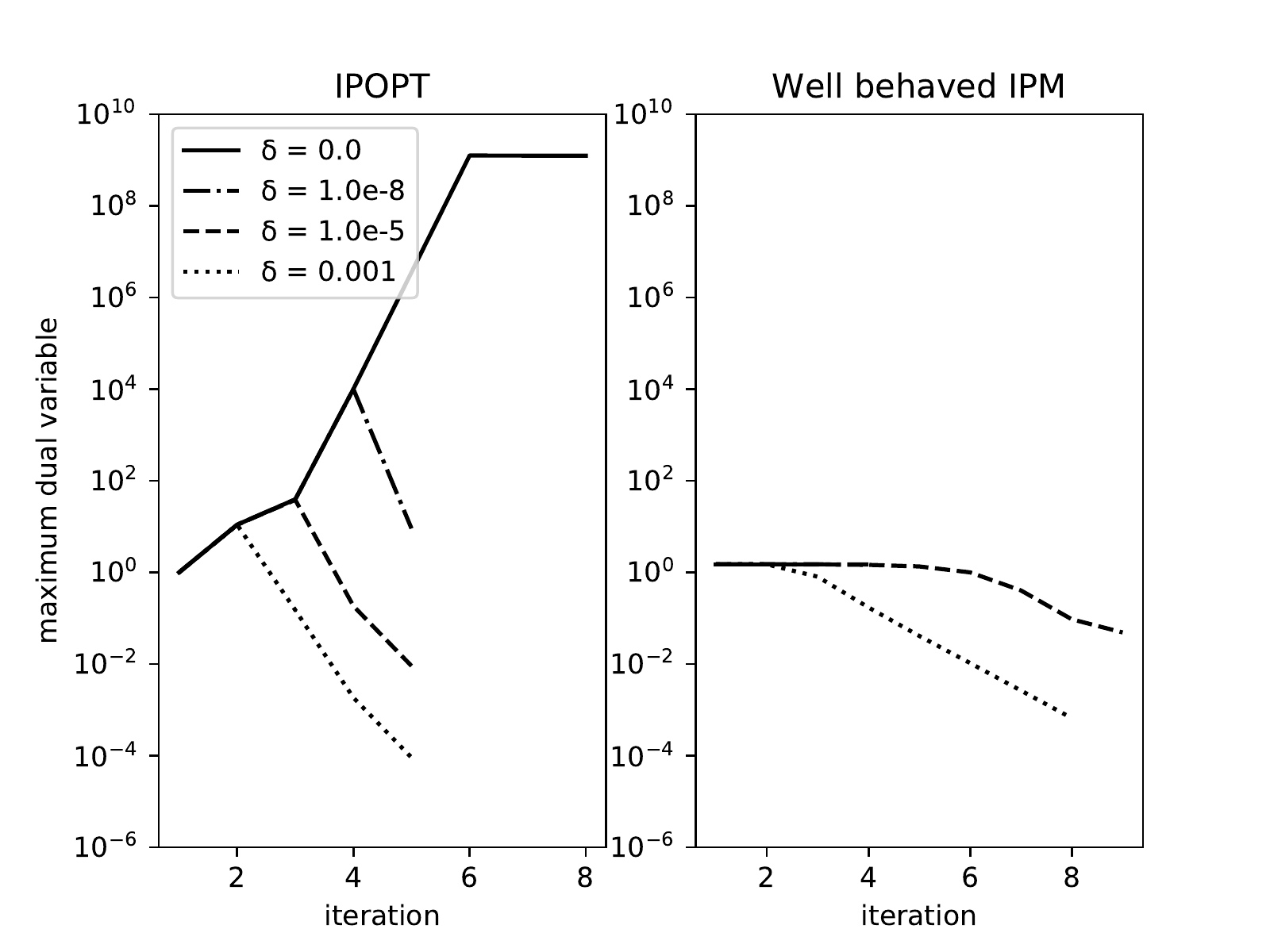}}
\caption{Comparison of the maximum dual variable value (vertical axis) against iterations (horizontal axis) using IPOPT and a well-behaved IPM \cite{hinder2018one} as the perturbation $\delta$ is changed.}\label{graph:toy-example}
\end{figure}


More thorough numerical experiments are given in Section~\ref{sec:experiments}, but first we establish our general theory. 

\section{Boundedness and maximal complementarity}
\label{sec:lag-bounded}


In this section, we show that when feasibility is reduced at the same rate as complementarity, the dual variables are bounded and satisfy maximal complementarity. But first we establish some basic results for convex problems on the optimality of limit points of sequences satisfying \eqref{seq-ipm-eq:nice-IPM}.

\paragraph{Notation.}
When it is clear from the context, we omit a quantifier ``$\forall k$'' when stating properties of every sufficiently large element of a sequence indexed by $k=1,2,\dots,\infty$. The Euclidean norm is denoted by $\|\cdot\|$; otherwise the $\ell_p$-norm (we only use $p=1, \infty$) is denoted by $\|\cdot\|_p$.

The following lemma gives a sufficient sequential condition for global optimality in the convex case. In our setting, the lemma is slightly more general than results found in the literature, e.g., see \cite[Corollary 3.1]{jeya2003}, \cite[Theorem 4.2]{andreani2010new}, \cite[Theorem 2.2]{haeser2011}, and \cite[Theorem 3.2]{giorgi2016}. Our condition is in fact equivalent to the one from \cite{giorgi2016} but with a redundant assumption omitted.

\begin{lemma}
\label{liminf}
If $f$ and $a_i$ for $i=1,\dots,m$ are convex functions, and $\{(x^k,y^k)\}\subset\mathbb{R}^n\times\mathbb{R}^m$ are such that
\begin{enumerate}
\item $x^k\to x^*$ with $a(x^*)\leq0$,
\item $y^k\geq0$,
\item $\lim\inf a(x^k)^\mathtt{T}y^k\geq0$,
\item $\grad_x\Lag(x^k,y^k)\to0$,
\end{enumerate}
then, $x^*$ is a solution of \eqref{orginal-problem}.
\end{lemma}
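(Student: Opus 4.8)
The plan is to reduce the claim to the elementary statement that $f(x)\ge f(x^*)$ for every feasible $x$, and then obtain this by passing to the limit with the help of convexity. First I would fix an arbitrary $x$ with $a(x)\le 0$; since $x^*$ itself is feasible by assumption~1, it suffices to prove $f(x)\ge f(x^*)$.

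The key observation is that for each $k$ the function $z\mapsto\Lag(z,y^k)=f(z)+(y^k)^\mathtt{T}a(z)$ is convex, because $f$ and every $a_i$ are convex and $y^k\ge 0$ by assumption~2. Hence it lies above its linearization at $x^k$; combining this with $(y^k)^\mathtt{T}a(x)\le 0$ (valid since $a(x)\le 0$ and $y^k\ge 0$) gives, for all $k$,
\[
f(x)\ \ge\ \Lag(x,y^k)\ \ge\ f(x^k)+(y^k)^\mathtt{T}a(x^k)+\grad_x\Lag(x^k,y^k)^\mathtt{T}(x-x^k).
\]
Next I would take $\liminf_{k\to\infty}$ of the right-hand side: $f(x^k)\to f(x^*)$ by continuity of $f$ and assumption~1; the inner product tends to $0$ because $x-x^k$ is bounded and $\grad_x\Lag(x^k,y^k)\to 0$ by assumption~4; and $\liminf (y^k)^\mathtt{T}a(x^k)\ge 0$ is precisely assumption~3. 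Since the first and third summands converge, the $\liminf$ of the sum is at least $f(x^*)$, so $f(x)\ge f(x^*)$. As $x$ was an arbitrary feasible point, $x^*$ is a global solution of \eqref{orginal-problem}.

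I do not anticipate a serious obstacle: the whole argument is a single application of the subgradient inequality for the convex Lagrangian together with the three hypotheses. The only points to watch are (i) using the perturbed-complementarity hypothesis in the correct direction — it is a $\liminf\ge 0$ bound, not an upper bound — and (ii) the fact that $\liminf$ is merely superadditive in general, which is harmless here since two of the three terms actually converge, so the bound $f(x^*)$ is attained without loss.
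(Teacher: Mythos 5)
Your proof is correct and follows essentially the same route as the paper's: both rest on the single chain $f(x)\ge \Lag(x,y^k)\ge \Lag(x^k,y^k)+\grad_x\Lag(x^k,y^k)^\mathtt{T}(x-x^k)$ and a passage to the limit. The only (harmless) difference is that the paper first specializes to $x=x^*$ to deduce $a(x^k)^\mathtt{T}y^k\to 0$ before taking the limit, whereas you use the $\liminf\ge 0$ hypothesis directly via superadditivity; both are valid.
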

\begin{proof}
Given $x$ with $a(x)\leq0$, we have $$f(x)\geq \Lag(x,y^k)\geq \Lag(x^k,y^k)+\grad_x \Lag(x^k,y^k)^\mathtt{T}(x-x^k).$$ Hence, \begin{equation}\label{aux1}a(x^k)^\mathtt{T}y^k\leq f(x)-f(x^k)+\grad_x \Lag(x^k,y^k)^\mathtt{T}(x^k-x).\end{equation} Thus, for $x=x^*$, we have $\lim\sup a(x^k)^\mathtt{T}y^k\leq0$. The assumption gives $a(x^k)^\mathtt{T}y^k\to0$. Taking the limit in (\ref{aux1}) we have $f(x)\geq f(x^*)$ and the result follows.\jo{\qed}
\end{proof}

The following lemma gives a sufficient condition for verifying the conditions of Lemma \ref{liminf} under our slack variable formulation, which suits better our interior point framework.

\begin{lemma}
\label{slack-lemma}
If $f$ and $a_i$ for $i=1,\dots,m$ are convex functions, and $\{(x^k,y^k,s^k)\}\subset\mathbb{R}^n\times\mathbb{R}^m\times\mathbb{R}^m$ are such that
\begin{enumerate}
\item $x^k\to x^*$ with $a(x^*)\leq0$ and $s^k\to-a(x^*)$,
\item $y^k\geq0$ and $s^k\geq0$,
\item $(y^k)^\mathtt{T}s^k\to0$,
\item $a_i(x^k)+s_i^k\geq0$ for all $i: a_i(x^*)=0$,
\item $\grad_x \Lag(x^k,y^k)\to0$,
\end{enumerate}
then, $x^*$ is a solution of \eqref{orginal-problem}.
\end{lemma}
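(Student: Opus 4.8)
The plan is to reduce Lemma~\ref{slack-lemma} to Lemma~\ref{liminf} by verifying its four hypotheses, the first two of which are immediate: hypothesis~(1) of Lemma~\ref{liminf} is exactly the convergence $x^k\to x^*$ with $a(x^*)\le 0$ assumed here, and hypothesis~(2), $y^k\ge 0$, is assumed directly. So the substance is to deduce the inequality $\liminf a(x^k)^\mathtt{T}y^k\ge 0$ and the gradient condition $\grad_x\Lag(x^k,y^k)\to 0$ from the slack-variable data.

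The gradient condition is the easy one: here the Lagrangian in the slack formulation coincides with $\Lag(x,y)=f(x)+y^\mathtt{T}a(x)$ in the $x$-variable (the slack $s$ enters only the constraint $a(x)+s=0$ and is multiplied by $y$ with a gradient in $x$ that vanishes), so hypothesis~(5) of Lemma~\ref{slack-lemma} is literally hypothesis~(4) of Lemma~\ref{liminf}. The real work is the sign condition. I would write
\[
a(x^k)^\mathtt{T} y^k = \bigl(a(x^k)+s^k\bigr)^\mathtt{T} y^k - (s^k)^\mathtt{T} y^k,
\]
and note that the second term tends to $0$ by hypothesis~(3). For the first term, split the sum over indices $i$ according to whether $a_i(x^*)=0$ or $a_i(x^*)<0$. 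For indices with $a_i(x^*)=0$, hypothesis~(4) gives $a_i(x^k)+s_i^k\ge 0$, and since $y_i^k\ge 0$ these contributions are nonnegative. For indices with $a_i(x^*)<0$: since $x^k\to x^*$ and $s^k\to -a(x^*)$, we have $a_i(x^k)+s_i^k\to a_i(x^*)-a_i(x^*)=0$; combined with $s_i^k\ge 0$, $y_i^k\ge 0$, and $s_i^k y_i^k\to 0$ (from hypothesis~(3), since each term is nonnegative and the sum goes to zero), one shows these contributions also vanish in the limit. Concretely, $s_i^k\to -a_i(x^*)>0$ forces $y_i^k\to 0$ (because $s_i^k y_i^k\to 0$), so $\bigl(a_i(x^k)+s_i^k\bigr)y_i^k\to 0$. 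Hence $\liminf (a(x^k)+s^k)^\mathtt{T}y^k\ge 0$, and therefore $\liminf a(x^k)^\mathtt{T} y^k\ge 0$, which is hypothesis~(3) of Lemma~\ref{liminf}.

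With all four hypotheses of Lemma~\ref{liminf} verified, that lemma yields that $x^*$ solves \eqref{orginal-problem}, completing the proof. The main obstacle, such as it is, is the bookkeeping in the sign argument: one must be careful that the decomposition into "active at $x^*$" and "inactive at $x^*$" indices is exhaustive and that in the inactive case the individual products $s_i^k y_i^k$, not merely their sum, are controlled — this uses that all summands are nonnegative, so convergence of the sum to zero forces each summand to zero. Everything else is routine limit-taking.
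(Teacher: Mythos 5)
Your proposal is correct and follows essentially the same route as the paper's proof: both reduce to Lemma~\ref{liminf} by checking $\liminf a(x^k)^\mathtt{T}y^k\ge 0$, handling active indices via $a_i(x^k)y_i^k\ge -s_i^k y_i^k\to 0$ (equivalently, your nonnegativity of $(a_i(x^k)+s_i^k)y_i^k$) and inactive indices by noting $s_i^k\to -a_i(x^*)>0$ together with $s_i^ky_i^k\to 0$ forces $y_i^k\to 0$. Your explicit remark that termwise convergence of $s_i^ky_i^k$ follows from nonnegativity of the summands is a detail the paper uses implicitly.
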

\begin{proof}
For $i: a_i(x^*)=0$, we have $a_i(x^k)y_i^k\geq-s_i^ky_i^k\to0$, while if $a_i(x^*)<0$, we have $y_i^k\to0$. The result follows from Lemma \ref{liminf}.\jo{\qed}
\end{proof}

We note that even in the nonconvex case, the existence of sequences satisfying the conditions of Lemmas \ref{liminf} and \ref{slack-lemma} are necessary at a local solution $x^*$, without constraint qualifications. This follows from the necessary existence of sequences $x^k\to x^*, y^k\geq0$ with $\grad_x\Lag(x^k,y^k)\to0, a_i(x^k)y_i^k\to0$ for all $i$, when $x^*$ is a local solution, given in \cite[Theorem 3.3]{andreani2010new}, by defining $s_i^k:=\max\{0,-a_i(x^k)\}$ for all $i$ and all $k$. See also \cite{cakkt2}.

\subsection{Boundedness of the dual sequence}

The boundedness of the dual sequence is an important property because the algorithm is otherwise prone to numerical instabilities.

In Theorem~\ref{convexbound}, we consider problems involving convex functions where the algorithm is converging to a KKT point. We show that if the primal feasibility, (scaled) dual feasibility and complementarity converge at the same rate, then the dual sequence $\{y^k\}$ is bounded. We refer to \cite[Theorem 4]{mizuno1995surface} for a more general result when the functions $f$ and $a$ are linear. This result is extended in Theorem~\ref{thm:nonconvex-bound-lag} to situations where the optimization problem may involve nonconvex functions. 

We try to present as few assumptions as possible; for example, assumptions are often placed only on constraints that are active at the limit. However, since in practice the active constraints are unknown, we advocate using IPMs that satisfy \eqref{seq-ipm-eq:nice-IPM}. All assumptions on the sequence of iterates made on theorems in this section can be subsumed by \eqref{seq-ipm-eq:nice-IPM} ignoring constant factors.

\begin{theorem}
\label{convexbound}
Suppose $f$ and $a_i$ for $i=1,\dots, m$ are convex functions and $\{(x^k,y^k,s^k,\mu^k)\}\subset\mathbb{R}^n\times\mathbb{R}^m\times\mathbb{R}^m\times\R$ with $\mu^{k}>0$ for all $k$ and $\mu^{k}\to0$ are such that:
\begin{enumerate}
\item $x^k\to x^*$  with $a(x^*)\leq0$ and $s^k\to-a(x^*)$,
\item $y^k\geq0$ and $s^k\geq0$,
\item for some $c\geq0$, $(y^k)^\mathtt{T}s^k\leq\mu^{k}c$,
\item for some $0<\ell\leq u$, $\mu^{k}\ell\leq a_i(x^k)+s_i^k\leq\mu^{k} u$ for all $i: a_i(x^*)=0$,
\item for some $d\geq0$, $\|\grad_x \Lag(x^k,y^k)\|\leq d\mu^{k}(\|y^k\|_1+1)$.
\end{enumerate}
Then, $x^*$ is a solution of \eqref{orginal-problem}. If $x^*$ is a KKT point, then $$\lim\sup\|y^k\|_1\leq\frac{2u}{\ell}\|y^*\|_1+\frac{4c}{\ell}m+\frac{2(c+d)}{\ell},$$ where $y^*$ is any Lagrange multiplier associated with $x^*$, i.e., $\grad \Lag(x^{*},y^{*}) = 0$, $y^{*} \ge 0$, and $a(x^{*})^T y^{*} = 0$. 
\end{theorem}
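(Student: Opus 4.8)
The plan is to combine the KKT conditions at $x^*$ with the approximate-KKT structure of the sequence to get a useful identity, then exploit convexity. First, by Lemma~\ref{slack-lemma} we get that $x^*$ is a solution of \eqref{orginal-problem}: hypotheses 1--2 match directly, hypothesis 3 gives $(y^k)^\mathtt{T}s^k \leq \mu^k c \to 0$, hypothesis 4 gives $a_i(x^k) + s_i^k \geq \mu^k \ell > 0$ for the active indices, and hypothesis 5 gives $\grad_x\Lag(x^k,y^k) \to 0$ since $\mu^k \to 0$ and we will control $\|y^k\|_1$ (or, more carefully, one first notes the bound to be proved is exactly what prevents $\mu^k\|y^k\|_1$ from misbehaving — but since the optimality claim is separate, I would simply observe that if $\|y^k\|_1$ had a subsequence going to infinity faster than $1/\mu^k$ the argument below would already yield a contradiction; alternatively invoke that $x^*$ is assumed KKT for the bound part, so the nontrivial content is the $\limsup$ estimate).

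For the main estimate, fix a Lagrange multiplier $y^*$ at $x^*$, so $\grad_x\Lag(x^*,y^*)=0$, $y^*\geq 0$, and $a(x^*)^\mathtt{T}y^* = 0$ (equivalently $y_i^* = 0$ whenever $a_i(x^*)<0$). I would apply convexity of $\Lag(\cdot,y^*)$ at $x^k$ and $x^*$: since $\grad_x\Lag(x^*,y^*)=0$ and $\Lag(\cdot,y^*)$ is convex (as $y^*\geq0$), $x^*$ minimizes $\Lag(\cdot,y^*)$, so
\begin{equation*}
\Lag(x^k,y^*) \geq \Lag(x^*,y^*) = f(x^*) + a(x^*)^\mathtt{T}y^* = f(x^*).
\end{equation*}
Similarly, convexity of $\Lag(\cdot,y^k)$ gives $f(x^*) = \Lag(x^*,y^k) - a(x^*)^\mathtt{T}y^k \geq \Lag(x^k,y^k) + \grad_x\Lag(x^k,y^k)^\mathtt{T}(x^*-x^k) - a(x^*)^\mathtt{T}y^k$. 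Subtracting the two inequalities and writing $\Lag(x^k,y^*) - \Lag(x^k,y^k) = a(x^k)^\mathtt{T}(y^* - y^k)$, one obtains an inequality of the form
\begin{equation*}
a(x^k)^\mathtt{T}(y^k - y^*) \;\leq\; \grad_x\Lag(x^k,y^k)^\mathtt{T}(x^*-x^k) - a(x^*)^\mathtt{T}y^k \;+\; (\text{small terms}),
\end{equation*}
and the whole game is to bound each piece by $\mu^k$ times a constant plus a small multiple of $\|y^k\|_1$. The term $a(x^*)^\mathtt{T}y^k = \sum_{i:a_i(x^*)<0} a_i(x^*)y_i^k \leq 0$ since $y^k\geq0$, so it helps us. On the active indices, $a_i(x^k) = (a_i(x^k)+s_i^k) - s_i^k$, where $a_i(x^k)+s_i^k \in [\mu^k\ell,\mu^k u]$ and $s_i^k y_i^k \leq (y^k)^\mathtt{T}s^k \leq \mu^k c$; this is where the factor $u/\ell$ will enter, because to extract $\|y^k\|_1$ from $a(x^k)^\mathtt{T}y^k$ one compares $a_i(x^k)+s_i^k \geq \mu^k\ell$ against the upper bounds. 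The gradient term is bounded by $\|\grad_x\Lag(x^k,y^k)\|\cdot\|x^*-x^k\| \leq d\mu^k(\|y^k\|_1+1)\|x^*-x^k\|$, and since $x^k\to x^*$ the factor $\|x^*-x^k\|$ is eventually $\leq 1$ (or any small constant), so this contributes at most $d\mu^k\|y^k\|_1 + d\mu^k$, i.e. a genuinely small multiple of $\mu^k\|y^k\|_1$ that can be absorbed.

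Putting the bounds together should give, for large $k$, something like $\mu^k\ell\|y^k\|_1 \leq \mu^k(2u\|y^*\|_1 + 4cm + 2c + 2d) + (\text{absorbable } \mu^k\|y^k\|_1 \text{ terms})$, and dividing by $\mu^k$ and rearranging yields the claimed $\limsup$ bound. The main obstacle I anticipate is bookkeeping the split between active and inactive constraints cleanly and making sure the ``absorbable'' terms (those proportional to $\mu^k\|y^k\|_1$, coming from the gradient inequality and possibly from $s_i^k y_i^k$ on inactive indices) are genuinely dominated — this needs $x^k$ close enough to $x^*$ and $\mu^k$ small enough, which is fine since both hold eventually; one also must be slightly careful that the constants $\ell, u$ are only assumed on active indices, so the inactive part must be handled separately using $y_i^k\to0$ and $s_i^k\to -a_i(x^*)>0$. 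I do not expect any deep difficulty beyond careful constant-chasing to land exactly on $\frac{2u}{\ell}\|y^*\|_1 + \frac{4c}{\ell}m + \frac{2(c+d)}{\ell}$.
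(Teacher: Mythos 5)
Your proposal follows essentially the same route as the paper's proof: sandwich $f(x^*)-f(x^k)$ between the lower bound $\tfrac{\ell}{2}\mu^k\|y^k\|_1 - O(\mu^k)$ obtained from convexity of $\Lag(\cdot,y^k)$ together with the decomposition $a(x^k)^\mathtt{T}y^k=(a(x^k)+s^k)^\mathtt{T}y^k-(s^k)^\mathtt{T}y^k$, and the upper bound $a(x^k)^\mathtt{T}y^*\le \mu^k u\|y^*\|_1$ obtained from $\Lag(x^k,y^*)\ge f(x^*)$, then divide by $\mu^k$. The only point to tighten is the order of the argument for the first claim: one must derive the lower-bound inequality first, conclude $\mu^k\|y^k\|_1\to0$ from $f(x^k)\to f(x^*)$ (your ``contradiction'' remark is the right instinct; the fallback of invoking the KKT assumption is not available there), and only then apply Lemma~\ref{slack-lemma}.
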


\begin{proof}
We have by convexity of $\Lag(x, y^{k})$ in $x$ that
$$f(x^*)\geq \Lag(x^*,y^k)\geq \Lag(x^k,y^k)+\grad_x \Lag(x^k,y^k)^\mathtt{T}(x^*-x^k),$$ which gives
\begin{equation}
\label{referee1}
f(x^*)-f(x^k)\geq a(x^k)^\mathtt{T}y^k+\grad_x \Lag(x^k,y^k)^\mathtt{T}(x^*-x^k).
\end{equation}
Also, $$a(x^k)^\mathtt{T}y^k=(a(x^k)+s^k)^\mathtt{T}y^k-(s^k)^\mathtt{T}y^k\geq\sum_{i:a_i(x^*)=0}\mu^{k}\ell y_i^k+\sum_{i:a_i(x^*)<0}(a_i(x^k)+s_i^k)y_i^k-\mu^{k}c.$$ Since $s_i^ky_i^k\geq0$ and $a_i(x^k)y_i^k\geq\frac{a_i(x^k)}{s_i^k}\mu^{k}c\geq-2\mu^{k}c$ for $i:a_i(x^*)<0$ and sufficiently large $k$, we have 
\begin{eqnarray}
a(x^k)^\mathtt{T}y^k\geq\ell\mu^{k}\sum_{i:a_i(x^*)=0}y_i^k-\sum_{i:a_i(x^*)<0}2c\mu^{k}-c\mu^{k}=\nonumber\\
\ell\mu^{k}\|y^k\|_1-\sum_{i:a_i(x^*)<0}(2c+\ell y_i^k)\mu^{k}-c\mu^{k}.\label{akyk-bound}
\end{eqnarray}
Also, $\grad_x \Lag(x^k,y^k)^\mathtt{T}(x^*-x^k)\geq-d\mu^{k}(\|y^k\|_1+1)\|x^*-x^k\|$.
Hence, substituting this and \eqref{akyk-bound} back in \eqref{referee1} we get$$f(x^*)-f(x^k)\geq\ell\mu^{k}\|y^k\|_1-d\mu^{k}(\|y^k\|_1+1)\|x^*-x^k\|-\sum_{i:a_i(x^*)<0}(2c+\ell y_i^k)\mu^{k}-c\mu^{k}.$$
We can take $k$ large enough that $\ell\mu^{k}\|y^k\|_1-d\mu^{k}(\|y^k\|_1+1)\|x^*-x^k\|\geq\frac{\ell}{2}\mu^{k}\|y^k\|_1-d\mu^{k}$, so that, 
\begin{equation}
\label{referee2}
f(x^*)-f(x^k)\geq\frac{\ell}{2}\mu^{k}\|y^k\|_1-\sum_{i:a_i(x^*)<0}(2c+\ell y_i^k)\mu^{k}-(c+d)\mu^{k}.
\end{equation}
Since $\ell>0$ and $y_i^k\to0$ for $i:a_i(x^*)<0$, we have by \eqref{referee2} that $\mu^{k}\|y^k\|_1\to0$. This implies that $\grad_x\Lag(x^k,y^k)\to0$, and we can use Lemma \ref{slack-lemma} to conclude that $x^*$ is a solution.

On the other hand, let $y^*\in\mathbb{R}^m$ be a Lagrange multiplier associated with $x^*$. Then, $f(x^*)=\Lag(x^*,y^*)\leq \Lag(x^k,y^*)$, which, combining with \eqref{referee2} yields 
\begin{equation}
\label{referee3}
\frac{\ell}{2}\mu^{k}\|y^k\|_1-\sum_{i:a_i(x^*)<0}(2c+\ell y_i^k)\mu^{k}-(c+d)\mu^{k}\leq f(x^*)-f(x^k)\leq a(x^k)^\mathtt{T}y^*.
\end{equation}
But $a(x^k)^\mathtt{T}y^*=(a(x^k)+s^k)^\mathtt{T}y^*-(s^k)^\mathtt{T}y^*\leq\mu^{k}u\|y^*\|_1$,
which implies, by dividing \eqref{referee3} by $\frac{\ell}{2}\mu^k$, that $$\|y^k\|_1\leq\frac{2u}{\ell}\|y^*\|_1+\sum_{i:a_i(x^*)<0}\left(\frac{4c}{\ell}+2y_i^k\right)+\frac{2(c+d)}{\ell}.$$ Since $y_i^k\to0$ for $i: a_i(x^*)<0$, the result follows.\jo{\qed}
\end{proof}

Optimization problems with complementarity constraints are an important class of nonconvex optimization problems where the Mangasarian-Fromovitz constraint qualification fails.  Typically, specialized IPMs for these problems are developed \cite{leyffer2006interior,benson2002interior}. The following corollary focuses on convex programs with complementarity constraints. It shows that any general purpose IPM satisfying \eqref{seq-ipm-eq:nice-IPM} has a bounded dual multipliers sequence under general conditions. 

\begin{corollary}
\label{coro:mpec-bound-lag}
Let $\{(x^k,y^k,s^k,\mu^k)\}\subset\mathbb{R}^n\times\mathbb{R}^m\times\mathbb{R}^m\times\R$ with $\mu^{k}>0$ and $\mu^{k}\to0$ be such that assumptions 1-5 of Theorem~\ref{convexbound} hold. Assume that problem \eqref{orginal-problem} is a convex program with complementarity constraints, that is:
\begin{subequations}\label{comp-problem}
\begin{flalign}
\minimize & f(x)   \\
\subjectTo & g(x) \le 0 \\
& x_i x_j \le 0 \quad (i,j) \in \mathbf{C} \label{comp-problem-comp-cons} \\
& x \ge 0,
\end{flalign}
\end{subequations}
where $\mathbf{C} \subseteq \{1, \dots, n \} \times \{1, \dots, n \}$ and $g_i : \R^{n} \rightarrow \R$ is convex for $i = 1, \dots, p$ ($p \le m$). Assume $x_i^{*} + x_j^{*} > 0$ for all $(i,j) \in \mathbf{C}$. Under these assumptions, $x^{*}$ is a local minimizer. Furthermore, if $x^{*}$ is a KKT point then $\{y^k\}$ is bounded.
\end{corollary}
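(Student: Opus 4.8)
The plan is to reduce the corollary to an application of Theorem~\ref{convexbound} to a convex program that agrees with \eqref{comp-problem} near $x^*$. First I would read off the complementarity structure at the limit: since $x\ge0$ and $x_i x_j\le0$ force $x_i^* x_j^*=0$, while $x_i^*+x_j^*>0$, exactly one of $x_i^*,x_j^*$ vanishes for each $(i,j)\in\mathbf{C}$; after swapping $i$ and $j$ within each pair (consistent, since the value of every coordinate is fixed) assume $x_i^*=0<x_j^*$. Let $(P')$ be the problem obtained from \eqref{comp-problem} by replacing each $x_i x_j\le0$ with the linear constraint $x_i\le0$. Then $(P')$ is convex, $x^*$ is feasible for $(P')$, and since the finitely many $x_j^k\to x_j^*>0$ are bounded below by a positive constant for large $k$, the feasible sets of \eqref{comp-problem} and $(P')$ coincide on a neighborhood of $x^*$ (if $x$ is near $x^*$: $x_i x_j\le0,\ x_i\ge0,\ x_j>0\Rightarrow x_i\le0$; conversely $x_i\le0,\ x_i\ge0\Rightarrow x_i=0\Rightarrow x_i x_j=0$).

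Next I would manufacture from $\{(x^k,y^k,s^k,\mu^k)\}$ a sequence verifying assumptions 1--5 of Theorem~\ref{convexbound} for $(P')$. Keep $x^k$, $\mu^k$ and all multiplier/slack entries attached to the $g$-block and the $x\ge0$ block; for $(i,j)\in\mathbf{C}$, with $a_{(i,j)}(x)=x_i x_j$, slack $s_{(i,j)}^k$ and multiplier $y_{(i,j)}^k$, set $\tilde y_{(i,j)}^k:=x_j^k y_{(i,j)}^k\ge0$ and $\tilde s_{(i,j)}^k:=s_{(i,j)}^k/x_j^k\ge0$. Then $x_i^k+\tilde s_{(i,j)}^k=(x_i^k x_j^k+s_{(i,j)}^k)/x_j^k$ and $\tilde s_{(i,j)}^k\tilde y_{(i,j)}^k=s_{(i,j)}^k y_{(i,j)}^k$, so, using that $x_i x_j\le0$ is active at $x^*$ and that $x_j^k\in[\tfrac12 x_j^*,2x_j^*]$ eventually, assumptions~4 and~3 for $(P')$ follow from those for \eqref{comp-problem} (assumption~3, all terms being nonnegative, yields the per-constraint bound $s_{(i,j)}^k y_{(i,j)}^k\le\mu^k c$); assumptions 1 and 2 are immediate. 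For assumption~5, writing $\widetilde{\Lag}$ for the Lagrangian of $(P')$ and using $\grad(x_i x_j)=x_j e_i+x_i e_j$ versus $\grad(x_i)=e_i$ (with $e_j$ the $j$th unit vector), one has $\grad_x\widetilde{\Lag}(x^k,\tilde y^k)=\grad_x\Lag(x^k,y^k)-\sum_{(i,j)\in\mathbf{C}}y_{(i,j)}^k x_i^k\,e_j$. The cross term is $O(\mu^k\|\tilde y^k\|_1)$: assumption~4 gives $x_i^k x_j^k\le\mu^k u$, hence $x_i^k\le 2\mu^k u/x_j^*$ when $x_i^k\ge0$, while the active constraint $-x_i\le0$ gives $x_i^k\ge-\mu^k u$, so $|x_i^k|\le C\mu^k$; combined with $y_{(i,j)}^k\le 2\tilde y_{(i,j)}^k/x_j^*$ this bounds $\sum_{(i,j)\in\mathbf{C}}|y_{(i,j)}^k x_i^k|$ by a constant times $\mu^k\|\tilde y^k\|_1$. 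Since $\|y^k\|_1$ and $\|\tilde y^k\|_1$ differ by at most a constant factor for large $k$, assumption~5 for $(P')$ holds with a new constant $d'$.

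Finally I would apply Theorem~\ref{convexbound} to $(P')$: it gives that $x^*$ solves $(P')$, and since the feasible sets agree near $x^*$, $x^*$ is a local minimizer of \eqref{comp-problem}. For boundedness, if $x^*$ is a KKT point of \eqref{comp-problem} with multiplier $y^*$, then $\tilde y^*$ given by the same rescaling $\tilde y^*_{(i,j)}:=x_j^* y^*_{(i,j)}$ is a KKT multiplier of $(P')$ (the dropped term $y^*_{(i,j)}x_i^* e_j$ vanishes because $x_i^*=0$, and complementarity is immediate since $x_i\le0$ is active at $x^*$), so $x^*$ is a KKT point of $(P')$ and Theorem~\ref{convexbound} yields $\limsup\|\tilde y^k\|_1<\infty$, hence $\limsup\|y^k\|_1<\infty$ and $\{y^k\}$ is bounded. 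I expect the main obstacle to be the bookkeeping in the middle paragraph: checking that the rescaled slacks and multipliers meet all five hypotheses at once, and in particular pushing the scaled dual-feasibility estimate through by using the primal-rate hypothesis to get $x_i^k=O(\mu^k)$ and then comparing the $\ell_1$-norms of $y^k$ and $\tilde y^k$; a variable shared by several complementarity pairs affects only the choice of labels, not the estimates.
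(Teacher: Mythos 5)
Your proposal is correct and follows essentially the same route as the paper: replace each bilinear constraint by a linear constraint that freezes the variable with positive limit, observe the feasible sets agree near $x^*$, and apply Theorem~\ref{convexbound} to the resulting convex program. The only difference is cosmetic — the paper uses the constraint $x_i^* x_j \le 0$ and keeps $(y^k,s^k)$ unchanged, whereas you use the unscaled linear constraint and rescale the multipliers and slacks by $x_j^k$ — and your explicit verification that the scaled dual-feasibility bound (assumption 5) transfers is in fact more detailed than the paper's.
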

\begin{proof}
To prove this result, it is sufficient to show that we are implicitly generating a sequence satisfying the assumptions of Theorem~\ref{convexbound}, where \eqref{orginal-problem} is replaced by the convex program
\begin{subequations}\label{convexified-mpec}
\begin{flalign}
\minimize & f(x)   \\
\subjectTo & g(x) \le 0 \\
& x_i^{*} x_j \le 0  \quad (i,j) \in \mathbf{F} \label{convexified-mpec-comp-cons} \\
& x \ge 0,
\end{flalign}
\end{subequations}
where $\mathbf{F} = \{ (l,k) \in \mathbf{C} : x_l^{*} > 0  \} \cup \{ (k,l): (l,k) \in \mathbf{C}, ~x_k^{*} > 0  \}$. By the strict complementarity assumption, we deduce that if $(i,j) \in \mathbf{C}$ then either $(i,j) \in \mathbf{F}$ or $(j,i) \in \mathbf{F}$; i.e., there is a one-to-one correspondence between constraints in \eqref{comp-problem} and \eqref{convexified-mpec}. Therefore if $\tilde{x}^{*} \in \{ x : \| \tilde{x}^{*} - x^{*} \|_{\infty} \le \frac{1}{2} \min_{(i,j) \in F} x^{*}_i \}$ is feasible for \eqref{comp-problem}, then $\tilde{x}^{*}$ is feasible for \eqref{convexified-mpec}. We deduce that any minimizer for \eqref{convexified-mpec} is a local minimizer for \eqref{comp-problem}. Now, $\frac{| x_i^k x_j^k - x_i^{*} x_j^k |}{\mu^k} = | x_i^k - x_i^{*} | \frac{x_j^k}{\mu^k} \le  | x_i^k - x_i^{*} | \frac{u}{x_i^k} \rightarrow 0$ for all $(i,j) \in \mathbf{F}$. Hence, the sequence $(x^k,y^k,s^k,\mu^k)$ satisfies the assumptions of Theorem~\ref{convexbound}, where \eqref{orginal-problem} is replaced by \eqref{convexified-mpec}. \jo{\qed}
\end{proof}

We now present a nonconvex version of Theorem \ref{convexbound}. For this, we assume that the limit point $x^*$ satisfies a sufficient optimality condition based on the star-convexity concept described below. This definition is a local version of the one from \cite{nesterov2006cubic}.

\begin{definition}
Let a function $q:\R^n\to\R$, a point $x^*\in\R^n$, and a set $S\subseteq\R^n$ be given.  We say that $q$ is star-convex around $x^*$ on $S$ when $$q(\alpha x+(1-\alpha)x^*)\leq\alpha q(x)+(1-\alpha)q(x^*)\mbox{ for all }\alpha\in[0,1]\mbox{ and }x\in S.$$
\end{definition}

\begin{theorem}
\label{thm:nonconvex-bound-lag}
Let $\{(x^k,y^k,s^k,\mu^k)\}\subset\mathbb{R}^n\times\mathbb{R}^m\times\mathbb{R}^m\times\R$ with $\mu^{k}>0$ and $\mu^{k}\to0$ be such that
\begin{enumerate}
\item $x^k\to x^*$  with $a(x^*)\leq0$ and $s^k\to-a(x^*)$,
\item $y^k\geq0$ and $s^k\geq0$,
\item for some $c\geq0$, $(y^k)^\mathtt{T}s^k\leq\mu^{k}c$,
\item for some $0<\ell\leq u$, $\mu^{k}\ell\leq a_i(x^k)+s_i^k\leq\mu^{k} u$ for all $i: a_i(x^*)=0$,
\item for some $d\geq0$, $\|\grad_x \Lag(x^k,y^k)\|\leq d\mu^{k}(\|y^k\|_1+1)$,
\item $x^*$ is a KKT point with Lagrange multiplier $y^*$,
\item There exist $\theta \ge 0$ and a neighborhood $\mathcal{B}$ of $x^*$ such that $\hat{\Lag}_{k}(x):=\Lag(x,y^k)+\theta a(x)^\mathtt{T}Y^{k}a(x)$ for all $k$ and $\hat{\Lag}_*(x):=\Lag(x,y^*)+\theta a(x)^\mathtt{T}Y^*a(x)$ are star-convex around $x^*$ on $\mathcal{B}$, where $Y^{k}=diag(y^k)$ and $Y^*=diag(y^*)$.\label{thm:nonconvex-bound-lag:final-ass}
\end{enumerate}
Then, $\{y^k\}$ is bounded.
\end{theorem}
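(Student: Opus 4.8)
The plan is to mimic the proof of Theorem~\ref{convexbound}, replacing $\Lag(\cdot,y^k)$ by $\hat{\Lag}_k$ and $\Lag(\cdot,y^*)$ by $\hat{\Lag}_*$, and reading off from star-convexity around $x^*$ exactly the two supporting-hyperplane inequalities that convexity supplied before. Write $A=\{i:a_i(x^*)=0\}$; for $i\notin A$ one has $s_i^k\to -a_i(x^*)>0$, so assumption~3 gives $y_i^k\le \mu^k c/s_i^k=O(\mu^k)$, and we may shrink $\mathcal{B}$ to a convex ball, so that for large $k$ both $x^k$ and the segment $[x^*,x^k]$ lie in $\mathcal{B}$. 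Two elementary facts drive everything. First, by assumption~6 (KKT at $x^*$) and complementary slackness ($y_i^*a_i(x^*)=0$) we get $\grad\hat{\Lag}_*(x^*)=\grad_x\Lag(x^*,y^*)+2\theta\sum_i y_i^* a_i(x^*)\grad a_i(x^*)=0$; combined with star-convexity of $\hat{\Lag}_*$ around $x^*$ this forces $x^*$ to be a minimizer of $\hat{\Lag}_*$ on $\mathcal{B}$. Second, although star-convexity of $\hat{\Lag}_k$ around $x^*$ only directly yields a supporting hyperplane \emph{at} $x^*$, taking the one-sided derivative of the chord inequality $\hat{\Lag}_k(x^*+\alpha(x^k-x^*))\le(1-\alpha)\hat{\Lag}_k(x^*)+\alpha\hat{\Lag}_k(x^k)$ at the endpoint $\alpha=1$ yields the ``tangent at $x^k$'' inequality $\hat{\Lag}_k(x^*)\ge \hat{\Lag}_k(x^k)+\grad\hat{\Lag}_k(x^k)^\mathtt{T}(x^*-x^k)$ --- the precise analogue of the first-order convexity inequality used in the convex proof, and, crucially, one involving $\grad\hat{\Lag}_k(x^k)$ (controllable) rather than $\grad\hat{\Lag}_k(x^*)$ (not controllable, since it contains $\sum_{i\in A}y_i^k\grad a_i(x^*)$).

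From these I would extract two estimates. \emph{Lower bound.} Since $\hat{\Lag}_k(x^*)=f(x^*)+\sum_{i\notin A}(a_i(x^*)+\theta a_i(x^*)^2)y_i^k=f(x^*)+O(\mu^k)$, the tangent-at-$x^k$ inequality gives $f(x^*)-f(x^k)\ge (y^k)^\mathtt{T}a(x^k)+\theta\sum_i y_i^k a_i(x^k)^2+\grad\hat{\Lag}_k(x^k)^\mathtt{T}(x^*-x^k)-O(\mu^k)$. Dropping the nonnegative quadratic term and bounding $(y^k)^\mathtt{T}a(x^k)\ge \ell\mu^k\|y^k\|_1-O(\mu^k)$ exactly as in \eqref{akyk-bound} (splitting $i\in A$ from $i\notin A$ via assumptions~3--4), it remains to absorb $\grad\hat{\Lag}_k(x^k)^\mathtt{T}(x^*-x^k)$. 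One bounds $\|\grad\hat{\Lag}_k(x^k)\|\le d\mu^k(\|y^k\|_1+1)+\|2\theta\sum_i y_i^k a_i(x^k)\grad a_i(x^k)\|$ and shows the second term is also $O(\mu^k(\|y^k\|_1+1))$, using $|y_i^k a_i(x^k)|\le \mu^k u\, y_i^k+\mu^k c$ on $A$ (from assumptions~3--4) and $y_i^k=O(\mu^k)$ off $A$; since $\|x^*-x^k\|\to0$, this contributes at most $\tfrac{\ell}{2}\mu^k\|y^k\|_1+O(\mu^k)$, leaving $f(x^*)-f(x^k)\ge \tfrac{\ell}{2}\mu^k\|y^k\|_1-O(\mu^k)$. \emph{Upper bound.} As $x^*$ minimizes $\hat{\Lag}_*$ on $\mathcal{B}$ and $a_i(x^*)y_i^*=0$, $f(x^*)\le \hat{\Lag}_*(x^k)=f(x^k)+(y^*)^\mathtt{T}a(x^k)+\theta\sum_i y_i^* a_i(x^k)^2$, with $(y^*)^\mathtt{T}a(x^k)=\sum_{i\in A}y_i^*a_i(x^k)\le \mu^k u\|y^*\|_1$ by assumption~4 and $s^k\ge0$. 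Combining the two estimates and dividing by $\tfrac{\ell}{2}\mu^k$ yields $\|y^k\|_1\le \tfrac{2u}{\ell}\|y^*\|_1+O(1)+\tfrac{2\theta}{\ell\mu^k}\sum_{i\in A}y_i^* a_i(x^k)^2$, so boundedness reduces to showing the last term is $O(1)$.

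The step I expect to be the main obstacle is precisely this last one: controlling $\tfrac{1}{\mu^k}\sum_{i\in A}y_i^* a_i(x^k)^2$, i.e. showing $a_i(x^k)^2=O(\mu^k)$ on the active set (indices with $y_i^*>0$ being active by complementary slackness). Writing $a_i(x^k)=(a_i(x^k)+s_i^k)-s_i^k$, assumption~4 bounds the positive part by $\mu^k u$, so $(a_i(x^k)^+)^2=O((\mu^k)^2)$, and gives $a_i(x^k)^-\le s_i^k$ on $A$; the issue is thus to bound $\sum_{i\in A}y_i^*(s_i^k)^2$ by $O(\mu^k)$, which must be teased out of $(y^k)^\mathtt{T}s^k\le\mu^k c$ together with the rate at which $s_i^k\to0$ on $A$ --- and it is here that the penalty $\theta a(x)^\mathtt{T}Y^ka(x)$ (added to buy star-convexity of $\hat{\Lag}_k$ and $\hat{\Lag}_*$) interacts most delicately with the $\mu^{-k}$ scaling in the final bound. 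I expect this to close under the stated hypotheses (which are meant to be subsumed by \eqref{seq-ipm-eq:nice-IPM}, pinning down the complementarity and feasibility rates), but it is the piece whose verification requires the most care; the rest is a fairly routine transcription of the convex argument with the two star-convexity inequalities standing in for convexity.
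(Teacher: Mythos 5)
Your overall architecture is exactly the paper's: the ``tangent at $x^k$'' inequality obtained by differentiating the chord inequality at $\alpha=1$, the lower bound $f(x^*)-f(x^k)\ge\tfrac{\ell}{2}\mu^k\|y^k\|_1-O(\mu^k)$ via the decomposition of $(y^k)^\mathtt{T}a(x^k)$ and the $O(\mu^k(\|y^k\|_1+1))$ control of the extra gradient term $2\theta\sum_i y_i^k a_i(x^k)\grad a_i(x^k)$, and the upper bound via $\grad\hat{\Lag}_*(x^*)=0$ plus star-convexity of $\hat{\Lag}_*$. All of that matches the paper's proof and is correct.

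However, the step you flag as the ``main obstacle'' is a genuine gap, and the route you propose for closing it would fail. You reduce the problem to showing $\sum_{i\in A}y_i^*\,a_i(x^k)^2=O(\mu^k)$, and then to $\sum_{i\in A}y_i^*(s_i^k)^2=O(\mu^k)$. That is not implied by the hypotheses: take $s_i^k=(\mu^k)^{1/4}$ and $y_i^k=(\mu^k)^{3/4}$, which is consistent with $y_i^ks_i^k\le c\mu^k$ and with assumption~4 (e.g.\ $a_i(x^k)=\ell\mu^k-s_i^k$), yet $(s_i^k)^2/\mu^k=(\mu^k)^{-1/2}\to\infty$. So the quadratic term cannot be bounded by $O(\mu^k)$ \emph{on its own}. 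The paper's resolution is to never separate it from the linear term: one must bound $a_i(x^k)+\theta a_i(x^k)^2=a_i(x^k)\bigl(1+\theta a_i(x^k)\bigr)$ as a whole. Since $a_i(x^k)\to a_i(x^*)=0$, for $k$ large one has $\theta|a_i(x^k)|\le\tfrac12$; hence if $a_i(x^k)\ge0$ the combined term is at most $2a_i(x^k)\le 2u\mu^k$ (by assumption~4 and $s_i^k\ge0$), while if $a_i(x^k)<0$ the combined term is at most $\tfrac12 a_i(x^k)\le0$ and can simply be discarded from the upper bound. Thus $\sum_{i\in A}\bigl(a_i(x^k)+\theta a_i(x^k)^2\bigr)y_i^*\le 2u\mu^k\|y^*\|_1$, and dividing by $\tfrac{\ell}{2}\mu^k$ finishes the proof. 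In short: the large negative excursions of $a_i(x^k)$ that make $a_i(x^k)^2$ too big are exactly the ones for which the linear term is negative enough to dominate the quadratic one, so no additional rate information about $s_i^k$ is needed.
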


\begin{proof}
From the definition of star-convexity of $\hat\Lag_{k}$, taking limit in $\alpha$, we have
\begin{eqnarray*}
f(x^*)+ \theta a(x^*)^\mathtt{T}Y^{k}a(x^*)\geq \hat \Lag_{k}(x^*)\geq \hat \Lag_{k}(x^k)+\grad_x \hat \Lag_{k}(x^k)^\mathtt{T}(x^*-x^k)=\\
\Lag(x^k,y^k)+\grad_x \Lag(x^k,y^k)^\mathtt{T}(x^*-x^k)+ \theta a(x^k)^\mathtt{T}Y^{k}a(x^k)+\\
\sum_{i=1}^m2\theta y_i^k a_i(x^k)\grad a_i(x^k)^\mathtt{T}(x^*-x^k).
\end{eqnarray*}
Therefore,
\begin{eqnarray}
f(x^*)-f(x^k)\geq-\theta a(x^*)^\mathtt{T}Y^{k}a(x^*)+ \theta a(x^k)^\mathtt{T}Y^{k}a(x^k)+a(x^k)^\mathtt{T}y^k\nonumber\\
+\grad_x \Lag(x^k,y^k)^\mathtt{T}(x^*-x^k)+\sum_{i=1}^m2\theta y_i^k a_i(x^k)\grad a_i(x^k)^\mathtt{T}(x^*-x^k).\label{referee4}
\end{eqnarray}

We proceed to bound the right-hand side of \eqref{referee4}. Note that $-\theta a(x^*)^\mathtt{T}Y^{k}a(x^*)=-\sum_{i:a_i(x^*)<0}\theta y_i^ka_i(x^*)^2\geq -\sum_{i:a_i(x^*)<0}\frac{a_i(x^*)^2}{s_i^k}\theta c\mu^{k}\geq\sum_{i:a_i(x^*)<0}2a_i(x^*)c\theta\mu^{k}$, while $\theta a(x^k)^\mathtt{T}Y^{k}a(x^k)\geq0$.

As in the proof of Theorem \ref{convexbound}, we have that \eqref{akyk-bound} holds; that is,

$$a(x^k)^\mathtt{T}y^k\geq\ell\mu^{k}\|y^k\|_1-\sum_{i:a_i(x^*)<0}(2c+\ell y_i^k)\mu^{k}-c\mu^{k}.$$

Clearly, $\grad_x \Lag(x^k,y^k)^\mathtt{T}(x^*-x^k)\geq-d\mu^{k}\|y^k\|_1\|x^*-x^k\|-d\mu^{k}$ when $\|x^*-x^k\|\leq1$. To bound the last term in \eqref{referee4},  note that for $i: a_i(x^*)<0$, $-|y_i^ka_i(x^k)|\geq\frac{a_i(x^k)}{s_i^k}c\mu^{k}\geq-2c\mu^{k}$, and for $i: a_i(x^*)=0$, we have $-|a_i(x^k)y_i^k|\geq-u\mu^{k}y_i^k$ if $a_i(x^k)\geq0$ and $-|a_i(x^k)y_i^k|\geq\ell\mu^{k}y_i^k-c\mu^{k}$ if $a_i(x^k)<0$. Therefore, 
\begin{eqnarray}
\nonumber\sum_{i=1}^m2\theta y_i^k a_i(x^k)\grad a_i(x^k)^\mathtt{T}(x^*-x^k)\geq\nonumber\\
-2\theta\sum_{i=1}^m|a_i(x^k)y_i^k|\|\grad a_i(x^k)\|\|x^k-x^*\|\geq\nonumber\\
\sum_{i: a_i(x^*)<0}-4\theta c\mu^{k}\|\grad a_i(x^k)\|+\nonumber\\
\sum_{i: a_i(x^*)=0}2\theta\min\{-u\mu^{k}y_i^k,\ell\mu^{k}y_i^k-c\mu^{k}\}\|\grad a_i(x^k)\|\|x^k-x^*\|.\label{aux-bound}
\end{eqnarray}

Note that $\min\{-u\mu^{k}y_i^k,\ell\mu^{k}y_i^k-c\mu^{k}\}$ is equal to $-u\mu^{k}y_i^k$ if $y_i^k\geq\frac{c}{\ell+u}$, while it is bounded by a constant times $\mu^{k}$ otherwise. Hence, substituting all bounds obtained back into \eqref{referee4}, we get for some constant $C\geq0$ the following:

\begin{eqnarray*}
f(x^*)-f(x^k)\geq-C\mu^{k}+\ell\mu^{k}\|y^k\|_1-d\mu^{k}\|y^k\|_1\|x^*-x^k\|+\\
\sum_{i: y_i^k\geq\frac{c}{\ell+u}}-u\mu^{k}y_i^k\|\grad a_i(x^k)\|\|x^*-x^k\|.
\end{eqnarray*}
Thus, we can take $k$ large enough such that $f(x^*)-f(x^k)\geq-C\mu^{k}+\frac{\ell}{2}\mu^{k}\|y^k\|_1$.

Since $\grad\hat\Lag_*(x^*)=0$ and $\hat\Lag_*$ is star-convex, we have $\hat\Lag_*(x^k)\geq\hat\Lag_*(x^*)=f(x^*)$, giving
\begin{eqnarray}-C\mu^{k}+\frac{\ell}{2}\mu^{k}\|y^k\|_1\leq a(x^k)^\mathtt{T}y^*+\theta a(x^k)^\mathtt{T}Y^{*}a(x^k)=\nonumber\\
\sum_{i: a_i(x^*)=0}(a_i(x^k)+\theta a_i(x^k)^2)y_i^*.\label{referee5}
\end{eqnarray}
For $i: a_i(x^*)=0$, we have for $k$ large enough that $a_i(x^k)+\theta a_i(x^k)^2\leq2a_i(x^k)$ if $a_i(x^k)\geq0$ and $a_i(x^k)+\theta a_i(x^k)^2\leq\frac{1}{2}a_i(x^k)$ if $a_i(x^k)\leq0$, where $a_i(x^k)\leq u\mu^{k}-s_i^k\leq u\mu^{k}$. It follows that the right-hand side of \eqref{referee5} is bounded by a constant times $\mu^k$. Therefore, dividing by $\mu^k$ shows that $\{y^k\}$ is bounded.\jo{\qed}
\end{proof}

\begin{remark}
Given the bound $|a_i(x^k)y_i^k|\leq\max\{u\mu^{k}y_i^k,c\mu^{k}-\ell\mu^{k}y_i^k\}$ obtained in \eqref{aux-bound} for $i:a_i(x^*)=0$, assumptions 1-5 in Theorem \ref{thm:nonconvex-bound-lag} together with the assumption $\mu^{k}\|y^k\|_1\to0$ imply assumption 6 under weak constraint qualifications \cite{rcpld,cpg,ccp,strict}. Also, assumption $6$ and the star-convexity of $\hat{\Lag}_*$ in assumption $7$ imply that $x^*$ is a local solution.
\end{remark}

\begin{remark}
Although we have decided by a clearer presentation, one could get the result under a weaker assumption than Assumption~\ref{thm:nonconvex-bound-lag:final-ass} of Theorem~\ref{thm:nonconvex-bound-lag}. In particular, suppose that $\hat{\Lag}_{k}(x):=\Lag(x,y^k)+\sum_{i=1}^m \theta_i^ka_i(x)^2$. Further assume $\theta_i^k\leq C\mu^{k}$ for some $C\geq0$ when $a_i(x^*)<0$, and one of the following two conditions hold for all $i$ such that $a_i(x^*)=0$,
\begin{enumerate}
\item $\theta_i^k\le\theta(y_i^k+\|y^k\|_1 \mathcal{I}[a_i\equiv-a_j\mbox{ for some }j\neq i])$, where $\mathcal{I}[\cdot]$ is the indicator function, or
\item $\theta_i^k\le\theta\|y^k\|_1$, under a strict complementarity assumption, namely, that $\{y_i^k\}$ is bounded away from zero,
\end{enumerate}
with some $\theta>0$.
Note that by taking $\theta_i^k:=\theta y_i^k$ with condition one we subsume Assumption~\ref{thm:nonconvex-bound-lag:final-ass} of Theorem~\ref{thm:nonconvex-bound-lag}. Condition one is useful when an equality constraint $a_i(x)=0$ is represented as two inequalities $a_i(x)\leq0$ and $a_j(x):=-a_i(x)\leq0$. In that case, we may select $\theta_i^k$ and $\theta_j^k$ considerably larger, namely, proportional to the sum of all dual variables (instead of only the one correspondent to constraint $i$ and $j$, respectively). The second condition says that we may consider this larger $\theta_i^k$ for all constraints (proportional to the sum of all dual variables), as long as we have strict complementarity. 

The main modification to the proof of Theorem \ref{thm:nonconvex-bound-lag} would be on the bound of $|\theta_i^ka_i(x^k)|$ in \eqref{aux-bound}. For the first condition, with equality constraints split as two inequalities, the bound $-|a_i(x^k)y_i^k|\geq -u\mu^{k}y_i^k$ holds regardless of the sign of $a_i(x^k)$. For the second condition one gets $-|\theta_i^ka_i(x^k)|\geq\theta_i^k\min\{-u\mu^{k},\ell\mu^{k}-s_i^k\}$, and the strict complementarity assumption would give $-s_i^k\geq-\mu^{k}\frac{u}{y_i^k}$ with $\frac{u}{y_i^k}$ bounded. The result would now follow as in the proof of Theorem \ref{thm:nonconvex-bound-lag}.
\end{remark}

Note that functions $\hat\Lag_{k}$, in which we require star-convexity, are closely related to the sharp Lagrangian function \cite{rockafellar}, where we replace the $\ell_2$-norm of $a(x)$ by a weighted $\ell_2$-norm squared.


\subsection{Maximal complementarity}

We now focus our attention on obtaining maximal complementarity of the dual sequence under a set of algorithmic assumptions more general than the ones described in \eqref{seq-ipm-eq:nice-IPM}.

We say that a Lagrange multiplier $y^*$ associated with $x^*$ is maximally complementary if it has the maximum number of non-zero components among all Lagrange multipliers associated with $x^*$. Note that a maximally complementary multiplier always exists, because any convex combination of Lagrange multipliers is also a Lagrange multiplier. If a maximally complementary Lagrange multiplier $y^*$ has a component $y^*_i=0$ with $a_i(x^*)=0$, then the $i$th component of all Lagrange multipliers associated with $x^*$ are equal to zero. An interesting property of an algorithm that finds a maximally complementary Lagrange multiplier $y^*$ is that if a strictly complementary Lagrange multiplier exists, then $y^*$ satisfies strict complementarity. 

There are benefits of algorithms with iterates that converge, in a subsequence, to a point satisfying strict complementarity. In particular, strict complementarity implies the critical cone is a subspace. One can therefore efficiently check if the second-order sufficient conditions hold by checking if the matrix $\grad_x^2 \Lag (x^{*}, y^{*})$ projected onto this subspace is positive definite. This allows us to confirm strict local optimality. Furthermore, when iterates converge to a point satisfying second-order sufficient conditions, strict complementarity and Mangasarian-Fromovitz, then the assumptions of Vicente and Wright \cite{vicente2002local} hold. Therefore the IPM they studied has superlinear convergence. Our work complements theirs because they could guarantee the premise of their theorems on nonconvex problems unless the optimal dual multipliers were unique, in which case standard results prove superlinear convergence \cite{el1996formulation}.

In the next theorem, we show that if the constraint violation is reduced quickly enough relative to complementarity, then the dual sequence will be maximally complementary. To prove this result we assume either the problem is convex, or that the following ``extended'' Lagrangian function is locally star-convex
\begin{equation}
\label{extlag}
\tilde{\Lag}(x, y) := \Lag(x, y) + \theta \sum_{i:a_i(x^*)=0}(\grad a_i(x^*)^\mathtt{T}(x - x^{*}))^2,
\end{equation}
and that $\| x^k - x^* \|  \le C \sqrt{\mu^k}$ for some constant $C > 0$. 

Similar results to Theorem~\ref{thm:maxcomp} are well known when the functions are convex \cite{guler1993convergence}, and therefore our main contribution is when the functions $f$ and $a_i$ for $i=1,\dots,m$ are not convex.


\begin{theorem}\label{thm:maxcomp}
Let $\{(x^k,y^k,s^k,\mu^k)\}\subset\mathbb{R}^n\times\mathbb{R}^m\times\mathbb{R}^m\times\R$ with $\mu^{k}>0$ and $\mu^{k}\to0$ be such that:
\begin{enumerate}
\item $x^k\to x^*$  with $a(x^*)\leq0$ and $s^k\to s^*:=-a(x^*)$,
\item $y^k\geq0$ and $s^k\geq0$ with $y^k\to y^*$ ($y^*$ is necessarily a Lagrange multiplier associated with $x^*$),
\item for some $0<b\leq c$, $\mu^{k}b\leq y_i^ks_i^k$ for all $i: a_i(x^*)=0$ and $(y^k)^\mathtt{T}s^k\leq\mu^{k}c$,
\item for some $u\geq0$, $|a_i(x^k)+s_i^k|\leq\mu^{k} u$ for all $i: a_i(x^*)=0$,
\item for some $d\geq0$, $\|\grad_x \Lag(x^k,y^k)\|\leq d\mu^{k}(\|y^k\|_1+1)$,
\item \label{ssoc-convex-assumption} the functions $f$ and $a_i$ for $i=1,\dots,m$ are convex functions, or \begin{itemize}
\item[\textbullet] there is a neighborhood $S$ of $x^*$ and $W$ of $y^*$ such that for all $y\in W$, the function $\tilde{\Lag}(x,y)$ is star-convex around $x^*$ on $S$, and\\[-0.3cm] 
\item[\textbullet] there is a constant $C\geq0$ such that $\|x^k-x^*\|\leq C\sqrt{\mu^k}$.
\end{itemize}
\end{enumerate}
Then, $y^*$ is maximally complementary, i.e., $y_i^*>0$ whenever there exists some Lagrange multiplier $\tilde{y}$ associated with $x^*$ with $\tilde{y}_i>0$.
\end{theorem}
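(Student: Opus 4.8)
The plan is to sandwich $f(x^*)-f(x^k)$ between a lower bound driven by $y^k$ and an upper bound driven by a chosen Lagrange multiplier, and then read off complementarity information about $s^k$. Write $I_0:=\{i:a_i(x^*)=0\}$. Since $y^*$ and every Lagrange multiplier vanish off $I_0$ by complementarity, and since $(y^k)^\mathtt{T}s^k\le c\mu^k$ with $s_i^k\to-a_i(x^*)>0$ for $i\notin I_0$, we get $y_i^k=O(\mu^k)$ for $i\notin I_0$; assumption~4 gives $a_i(x^k)=-s_i^k+O(\mu^k)$ for $i\in I_0$; and $\|y^k\|_1\to\|y^*\|_1$ together with assumption~5 gives $\grad_x\Lag(x^k,y^k)=O(\mu^k)$. (Here and below $O(\mu^k)$ means bounded by a fixed constant times $\mu^k$ for large $k$.)

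\emph{Lower bound.} I would first show $f(x^*)-f(x^k)\ge(y^k)^\mathtt{T}a(x^k)-O(\mu^k)$. In the convex case this is the subgradient inequality of the convex function $\Lag(\cdot,y^k)$ at $x^k$ evaluated at $x^*$, together with $\Lag(x^*,y^k)=f(x^*)+(y^k)^\mathtt{T}a(x^*)\le f(x^*)$ and $|\grad_x\Lag(x^k,y^k)^\mathtt{T}(x^*-x^k)|=o(\mu^k)$, exactly as in the proof of Theorem~\ref{convexbound}. In the nonconvex case I would use that $\tilde\Lag(\cdot,y^k)$ is star-convex around $x^*$ on $S$ once $y^k\in W$ (which holds eventually): a differentiable function lying below its chord on the segment $[x^*,x^k]$ satisfies the subgradient inequality at the endpoint $x^k$, so $\tilde\Lag(x^*,y^k)\ge\tilde\Lag(x^k,y^k)+\grad_x\tilde\Lag(x^k,y^k)^\mathtt{T}(x^*-x^k)$; using $\|x^k-x^*\|\le C\sqrt{\mu^k}$, the extra quadratic term of $\tilde\Lag$ is $O(\mu^k)$ and its gradient $O(\sqrt{\mu^k})$, so the whole correction is $O(\mu^k)$, while $\tilde\Lag(x^*,y^k)=\Lag(x^*,y^k)\le f(x^*)$.

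\emph{Upper bound.} Fix $i$ for which some Lagrange multiplier $\tilde y$ has $\tilde y_i>0$ (so $i\in I_0$). In the convex case $x^*$ minimizes the convex function $\Lag(\cdot,\tilde y)$ because $\grad_x\Lag(x^*,\tilde y)=0$, hence $f(x^*)=\Lag(x^*,\tilde y)\le\Lag(x^k,\tilde y)$, i.e.\ $f(x^*)-f(x^k)\le\tilde y^\mathtt{T}a(x^k)$. The nonconvex case is the main obstacle: $\tilde\Lag(\cdot,y)$ is assumed star-convex only for $y$ in the neighborhood $W$ of $y^*$, and $\tilde y$ may lie outside $W$. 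I would get around this by replacing $\tilde y$ by $y^\lambda:=(1-\lambda)y^*+\lambda\tilde y$ with $\lambda>0$ small enough that $y^\lambda\in W$; then $y^\lambda$ is still a Lagrange multiplier, so $\grad_x\tilde\Lag(x^*,y^\lambda)=\grad_x\Lag(x^*,y^\lambda)=0$ and the star-convex function $\tilde\Lag(\cdot,y^\lambda)$ is minimized at $x^*$ on $S$, giving $\tilde\Lag(x^k,y^\lambda)\ge\tilde\Lag(x^*,y^\lambda)=f(x^*)$; dropping the $O(\mu^k)$ quadratic term yields $f(x^*)-f(x^k)\le(y^\lambda)^\mathtt{T}a(x^k)+O(\mu^k)$. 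The point of the averaging is that $y^\lambda_i=(1-\lambda)y_i^*+\lambda\tilde y_i\ge\lambda\tilde y_i>0$.

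\emph{Conclusion.} Combining the two bounds (with $y^\lambda$ in place of $\tilde y$ in the nonconvex case) gives $(y^k-y^\lambda)^\mathtt{T}a(x^k)\le O(\mu^k)$. Substituting $a_j(x^k)=-s_j^k+O(\mu^k)$ for $j\in I_0$ and using that both $y^k$ and $y^\lambda$ are $O(\mu^k)$ off $I_0$, this collapses to $\sum_{j\in I_0}y^\lambda_j s_j^k\le\sum_{j\in I_0}y_j^k s_j^k+O(\mu^k)\le(y^k)^\mathtt{T}s^k+O(\mu^k)\le c\mu^k+O(\mu^k)=O(\mu^k)$. All summands are nonnegative, so $y^\lambda_j s_j^k=O(\mu^k)$ for each $j\in I_0$; taking $j=i$ and dividing by the fixed positive number $y^\lambda_i$ gives $s_i^k=O(\mu^k)$. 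Finally assumption~3, $b\mu^k\le y_i^k s_i^k$, forces $y_i^k\ge b\mu^k/s_i^k$ to stay bounded below by a positive constant, hence $y_i^*=\lim_k y_i^k>0$, which is maximal complementarity. Apart from the neighborhood issue resolved by the averaging trick, the estimates are the ones already used for Theorem~\ref{convexbound}; the only genuinely new ingredient is reading the subgradient inequality of a star-convex function at the endpoint $x^k$ and controlling the quadratic correction via $\|x^k-x^*\|\le C\sqrt{\mu^k}$.
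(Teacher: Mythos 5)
Your proposal is correct and follows essentially the same route as the paper's proof: the core estimate in both is that $\sum_{i:a_i(x^*)=0}a_i(x^k)(y_i^k-\tilde y_i)\le O(\mu^k)$, obtained from (star-)convexity of $\Lag$ (resp.\ $\tilde\Lag$) together with $\grad_x\Lag(x^*,\tilde y)=0$, the bound $\|x^k-x^*\|\le C\sqrt{\mu^k}$ to control the quadratic correction, and the same averaging trick $y^\lambda=(1-\lambda)y^*+\lambda\tilde y$ to place the comparison multiplier inside $W$. The only cosmetic difference is the last step: you deduce $s_i^k=O(\mu^k)$ and invoke $y_i^ks_i^k\ge b\mu^k$, while the paper bounds $\sum_i\tilde y_i/y_i^k$ directly via the same complementarity lower bound; these are equivalent.
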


\begin{proof}
First, observe that for any Lagrange multiplier $\tilde{y}$ associated with $x^*$ we have
\begin{flalign}
\nonumber\sum_{i: a_i(x^*)=0} \frac{\tilde{y}_i}{y^k_i}&\le \sum_{i:a_i(x^*)=0}\frac{1}{\mu^{k}b} s_i^k\tilde{y}_i \\
\nonumber&=  \sum_{i:a_i(x^*)=0}\frac{1}{\mu^{k} b} \left(  s_i^k(\tilde{y}_i-y_i^k)+s_i^ky_i^k \right) \\
\nonumber&=\sum_{i:a_i(x^*)=0}\frac{1}{\mu^{k} b} \left(  (-a_i(x^k))(\tilde{y}_i-y_i^k)+(a_i(x^k)+s_i^k)(\tilde{y}_i-y_i^k)+s_i^ky_i^k \right)\\
&\le\sum_{i:a_i(x^*)=0}\frac{ a_i(x^k)(y_i^{k} - \tilde{y}_i ) }{\mu^{k} b}  + \frac{u}{b}\|y^{k} - \tilde{y}\|_1+ \frac{c}{b}. \label{eq:max-comp-bound}
\end{flalign}
If we can show that $a_i(x^{k})(y_i^{k} - \tilde{y}_i)$ is bounded by a constant times $\mu^{k}$, then the boundedness of the expression in \eqref{eq:max-comp-bound} would imply that $y_i^k$ can only converge to zero when $\tilde y_i=0$ for all Lagrange multipliers, which gives the result.
 The remainder of the proof is dedicated to showing this and separately considers the two cases given in assumption~\ref{ssoc-convex-assumption}.
 
 First, we consider the case where $f$ and $a_i$ for $i=1,\dots,m$ are convex functions. Since $\grad_x\Lag(x^*,\tilde{y})=0$, we have $\Lag(x^k,\tilde{y})\geq\Lag(x^*,\tilde{y})$, and thus,
\begin{flalign}
\nonumber( a(x^k)  - a(x^{*}) )^\mathtt{T} (y^k - \tilde{y}) &= \left( {\Lag}(x^{*}, \tilde{y}) - {\Lag}(x^{*}, y^k)  \right) + \left( {\Lag}(x^k, y^k) -  {\Lag}(x^{k}, \tilde{y}) \right) \\
\nonumber&\le {\Lag}(x^{k}, y^{k}) -  {\Lag}(x^{*}, y^k)  \\
&\le \grad_x {\Lag}(x^{k}, y^{k})^\mathtt{T} (x^{k} - x^{*} ),\label{eq:lag-inequality-final}
\end{flalign}
where the last inequality uses the convexity of $\Lag(x, y^{k})$ with respect to $x$. Since 
$$(a(x^k)  - a(x^{*}) )^\mathtt{T} (y^k - \tilde{y})=\sum_{i:a_i(x^*)=0}a_i(x^k)(y_i^k-\tilde{y}_i)+\sum_{i:a_i(x^*)<0}(a_i(x^k)-a_i(x^*))y_i^k$$ and $a_i(x^*)y_i^k\leq0$, we have $$\sum_{i:a_i(x^*)=0}a_i(x^k)(y_i^k-\tilde{y}_i)\leq\grad_x {\Lag}(x^{k}, y^{k})^\mathtt{T}(x^k-x^*)-\sum_{i:a_i(x^*)<0}a_i(x^k)y_i^k.$$

It remains to bound the right-hand side of the previous expression. For $i:a_i(x^*)<0$ we have $-a_i(x^k)y_i^k\leq \frac{-a_i(x^k)}{s_i^k}\mu^{k}c\leq2\mu^{k}c$.
Also, $\|\grad_x {\Lag}(x^{k}, y^{k})\|\leq d\mu^{k}(\|y^k\|_1+1)\leq d\mu^{k}(\|y^*\|_{1} +2)$. This concludes the proof in the convex case.

On the other hand, let us assume the remaining conditions in assumption 6. We note first that we can take the Lagrange multiplier $\tilde{y}$ sufficiently close to $y^*$ without loss of generality because for any Lagrange multiplier $\hat{y}$ associated with $x^*$ we can take $\tilde{y}$ of the form $\tilde{y} := \eta \hat{y} + (1 - \eta) y^{*}, \eta\in(0,1)$, with the property that if $\hat{y}_{i} > 0$ then $\tilde{y}_{i} > 0$. Now, similarly to \eqref{eq:lag-inequality-final}, from the star-convexity of $\tilde{\Lag}(x,\tilde{y})$ and $\tilde{\Lag}(x,y^k)$ we have
\begin{flalign}
\nonumber( a(x^k)  - a(x^{*}) )^\mathtt{T} (y^k - \tilde{y}) &= \left( \tilde{\Lag}(x^{*}, \tilde{y}) - \tilde{\Lag}(x^{*}, y^k)  \right) + \left( \tilde{\Lag}(x^k, y^k) -  \tilde{\Lag}(x^{k}, \tilde{y}) \right) \\
\nonumber&\le \tilde{\Lag}(x^{k}, y^{k}) -  \tilde{\Lag}(x^{*}, y^k)  \\
&\le \grad_x \tilde{\Lag}(x^{k}, y^{k})^\mathtt{T} (x^{k} - x^{*} ). \label{eq:lag-inequality-nonconvex}
\end{flalign}
Hence, $$\sum_{i:a_i(x^*)=0}a_i(x^k)(y_i^k-\tilde{y}_i)\leq\grad_x \tilde{\Lag}(x^{k}, y^{k})^\mathtt{T}(x^k-x^*)-\sum_{i:a_i(x^*)<0}a_i(x^k)y_i^k.$$
It remains to bound the right-hand side of the previous expression by a constant times $\mu^k$.
Note that $-a_i(x^k)y_i^k\leq2\mu^{k}c$ for $i:a_i(x^*)<0$ and $$\|\grad_x\tilde{\Lag}(x^k,y^k)\|\leq d\mu^{k}(\|y^k\|_1+1)+2\theta\sum_{i: a_i(x^*)=0}\|\grad a_i(x^*)\|^2 \|x^k-x^*\|.$$ 
The result now follows from the bound $\|x^k-x^*\|\leq C\sqrt{\mu^k}$.\jo{\qed}
\end{proof}

The next lemma shows that one can guarantee the upper bound on $\{\|x^k-x^*\|\}$ given in assumption 6 of Theorem \ref{thm:maxcomp} by assuming the standard second-order sufficient condition.

\begin{lemma}[\citet{hager2014}]\label{lem:distance}Let $f$ and $a$ be twice differentiable at a local minimizer $x^*$ with a Lagrange multiplier $y^*\in\R^m$ satisfying the sufficient second-order optimality condition: \begin{eqnarray}\label{ssoc}d^\mathtt{T}\grad^2_{xx}\Lag(x^*,y^*)d\geq\lambda\|d\|^2, \mbox{ for all }d\mbox{ such that }\\
\grad f(x^*)^\mathtt{T}d\leq0, \grad a_i(x^*)^\mathtt{T}d\leq0, i: a_i(x^*)=0,\nonumber\end{eqnarray}
for some $\lambda>0$. Then, there is a neighborhood $\mathcal{B}$ of $(x^*,y^*,-a(x^*))$ such that if $(x,v,s)\in\mathcal{B}$ with $v\geq0$, $v_i=0$ for $i: a_i(x^*)<0$ and $s\geq0$, we have $$\|x-x^*\|\leq C\sqrt{\max\{\|\grad_x\Lag(x,v)\|,\|[a(x)+s]_{i:a_i(x^*)=0}\|,v^\mathtt{T}s\}}$$ for some $C\geq0$.
\end{lemma}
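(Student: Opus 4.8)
The plan is to prove this error bound by a standard compactness argument by contradiction, being careful that every Taylor expansion is centered at $x^*$ (twice differentiability is assumed only there, not on a neighborhood). Suppose no such neighborhood $\mathcal{B}$ and constant $C$ exist. Taking a shrinking sequence of neighborhoods of $(x^*,y^*,-a(x^*))$ and $C=k$, we obtain a sequence $(x^k,v^k,s^k)\to(x^*,y^*,-a(x^*))$ with $v^k\geq0$, $v_i^k=0$ for $i:a_i(x^*)<0$, and $s^k\geq0$, such that, writing $r^k:=\max\{\|\grad_x\Lag(x^k,v^k)\|,\ \|[a(x^k)+s^k]_{i:a_i(x^*)=0}\|,\ (v^k)^\mathtt{T}s^k\}$, we have $\|x^k-x^*\|>0$ and $r^k/\|x^k-x^*\|^2\to0$ (hence also $r^k/\|x^k-x^*\|\to0$, since $\|x^k-x^*\|\to0$). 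Passing to a subsequence, $d^k:=(x^k-x^*)/\|x^k-x^*\|\to d$ with $\|d\|=1$.

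First I would show that $d$ lies in the cone appearing in \eqref{ssoc}. For $i:a_i(x^*)=0$ we have $a_i(x^k)\leq a_i(x^k)+s_i^k\leq r^k$ (using $s_i^k\geq0$ and $a_i(x^*)=0$), so a first-order expansion of $a_i$ at $x^*$ gives $\grad a_i(x^*)^\mathtt{T}d^k\leq r^k/\|x^k-x^*\|+o(1)\to0$, i.e. $\grad a_i(x^*)^\mathtt{T}d\leq0$. For the objective, take the inner product of $\grad_x\Lag(x^k,v^k)$ with $x^k-x^*$; after expanding $\grad f$ and each $\grad a_i$ at $x^*$ and using $v_i^k=0$ on inactive constraints, the surviving cross term is $\sum_{i:a_i(x^*)=0}v_i^k a_i(x^k)$, which is $O(r^k)$ because it equals $\sum_{i:a_i(x^*)=0}v_i^k(a_i(x^k)+s_i^k)-(v^k)^\mathtt{T}s^k$ and $\{v^k\}$ is bounded. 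Dividing by $\|x^k-x^*\|$ and letting $k\to\infty$ yields $\grad f(x^*)^\mathtt{T}d\leq0$, so \eqref{ssoc} gives $d^\mathtt{T}\grad^2_{xx}\Lag(x^*,y^*)d\geq\lambda$.

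Next I would sandwich $\Lag(x^k,y^*)-\Lag(x^*,y^*)$. For the lower bound, a second-order expansion of $x\mapsto\Lag(x,y^*)$ at $x^*$ (where $\grad_x\Lag(x^*,y^*)=0$) gives $\Lag(x^k,y^*)-\Lag(x^*,y^*)=\tfrac12\|x^k-x^*\|^2(d^k)^\mathtt{T}\grad^2_{xx}\Lag(x^*,y^*)d^k+o(\|x^k-x^*\|^2)\geq\tfrac{\lambda}{4}\|x^k-x^*\|^2$ for large $k$, since $(d^k)^\mathtt{T}\grad^2_{xx}\Lag(x^*,y^*)d^k\to d^\mathtt{T}\grad^2_{xx}\Lag(x^*,y^*)d\geq\lambda$. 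For the upper bound, write $\Lag(x^k,y^*)-\Lag(x^*,y^*)=(f(x^k)-f(x^*))+\sum_{i:a_i(x^*)=0}y_i^*a_i(x^k)$, bound the last sum by $O(r^k)$ as above, and use $f(x^k)-f(x^*)=\Lag(x^k,v^k)-\Lag(x^*,v^k)-\sum_iv_i^ka_i(x^k)$, where $\sum_iv_i^ka_i(x^k)=\sum_{i:a_i(x^*)=0}v_i^ka_i(x^k)=O(r^k)$ again. Eliminating $\grad_x\Lag(x^*,v^k)$ between the second-order expansion of $\Lag(\cdot,v^k)$ at $x^*$ and the first-order expansion of $\grad_x\Lag(\cdot,v^k)$ at $x^*$ gives $\Lag(x^k,v^k)-\Lag(x^*,v^k)=\grad_x\Lag(x^k,v^k)^\mathtt{T}(x^k-x^*)-\tfrac12\|x^k-x^*\|^2(d^k)^\mathtt{T}\grad^2_{xx}\Lag(x^*,v^k)d^k+o(\|x^k-x^*\|^2)$, which, using $\|\grad_x\Lag(x^k,v^k)\|\leq r^k$, $v^k\to y^*$, and continuity of $v\mapsto\grad^2_{xx}\Lag(x^*,v)$, is at most $r^k\|x^k-x^*\|-\tfrac{\lambda}{4}\|x^k-x^*\|^2+o(\|x^k-x^*\|^2)$ for large $k$. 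Combining the two bounds yields, for large $k$, $\tfrac{\lambda}{4}\|x^k-x^*\|^2\leq O(r^k)+r^k\|x^k-x^*\|$; dividing by $\|x^k-x^*\|^2>0$ and letting $k\to\infty$ contradicts $r^k/\|x^k-x^*\|^2\to0$ and $r^k/\|x^k-x^*\|\to0$, proving the lemma.

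The main obstacle is the bookkeeping of the error terms, and in particular ensuring that every Taylor expansion is taken at $x^*$ rather than at the moving point $x^k$. The decisive trick for the upper bound is to trade $\grad_x\Lag(x^*,v^k)$, which is not controlled, for $\grad_x\Lag(x^k,v^k)$, whose norm the definition of $r^k$ controls, at the price of flipping the sign of the quadratic term; the second-order sufficient condition \eqref{ssoc} then contributes a term of order $-\|x^k-x^*\|^2$ that the remaining $O(r^k)$ and $r^k\|x^k-x^*\|$ terms cannot absorb. I note that the argument uses only that $x^*$ is a KKT point with multiplier $y^*$ together with \eqref{ssoc}, not that $x^*$ is a local minimizer per se.
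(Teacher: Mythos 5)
Your proof is correct, but it takes a genuinely different route from the paper: the paper does not prove this lemma from scratch at all, it simply invokes the error bound of \citet{hager2014} (their Theorem 4.2), observing that the slack formulation $a(x)+s=0$, $s\ge 0$ is equivalent to the inequality formulation, that inactive constraints can be dropped, and that norms are equivalent. You instead give a self-contained contradiction/compactness argument: negating the bound produces $(x^k,v^k,s^k)\to(x^*,y^*,-a(x^*))$ with $r^k/\|x^k-x^*\|^2\to 0$, the normalized directions $d^k\to d$ are shown to land in the critical cone of \eqref{ssoc} (using $a_i(x^k)\le a_i(x^k)+s_i^k\le r^k$ for active $i$, and the identity $\sum_{i:a_i(x^*)=0}v_i^k a_i(x^k)=\sum_i v_i^k(a_i(x^k)+s_i^k)-(v^k)^\mathtt{T}s^k=O(r^k)$ for the objective direction), and then the two-sided estimate of $\Lag(x^k,y^*)-\Lag(x^*,y^*)$ — lower bound from the Peano-form second-order expansion of $\Lag(\cdot,y^*)$ at $x^*$ plus \eqref{ssoc}, upper bound by trading $\grad_x\Lag(x^*,v^k)$ for the controlled quantity $\grad_x\Lag(x^k,v^k)$ at the cost of a $-\tfrac12\|x^k-x^*\|^2(d^k)^\mathtt{T}\grad^2_{xx}\Lag(x^*,v^k)d^k$ term — yields $\|x^k-x^*\|^2\lesssim O(r^k)+r^k\|x^k-x^*\|+o(\|x^k-x^*\|^2)$, a contradiction. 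All Taylor expansions are correctly centered at $x^*$, which is exactly what twice differentiability at the single point $x^*$ permits, and the uniformity of the remainders in $k$ is harmless because $\Lag(x,v)$ is affine in $v$ and $\{v^k\}$ is bounded. What the paper's approach buys is brevity and an appeal to a result stated in greater generality; what yours buys is a self-contained argument that makes the mechanism transparent and, as you note, shows that only the KKT conditions at $(x^*,y^*)$ together with \eqref{ssoc} are used, not local minimality itself.
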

\begin{proof}
The result follows from \cite[Theorem 4.2]{hager2014} because the sufficient optimality condition is equivalently stated at constraints $a(x)\leq0$ or at the slack variable formulation $a(x)+s=0, s\geq0$. Inactive constraints are removed from the problem and equivalence of norms is employed.\jo{\qed}
\end{proof}

Another useful result is the following.


\begin{lemma}[\citet{debreu1952definite}]\label{lem:well-known-result}
Let $H \in \R^{n \times n}$ be a symmetric matrix and $A \in \R^{m \times n}$.
If $d^T H d > 0$ for all $d \in \R^{n}$ such that $A d = 0$, then there exists $\theta \ge 0$ such that $H + \theta A^T A \succ 0$.
\end{lemma}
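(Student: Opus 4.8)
The plan is a proof by contradiction exploiting compactness of the unit sphere. Suppose no $\theta \ge 0$ makes $H + \theta A^\mathtt{T}A$ positive definite. Then for each positive integer $k$ the matrix $H + kA^\mathtt{T}A$ is not positive definite, so there is a vector $d^k$ with $\|d^k\| = 1$ and $(d^k)^\mathtt{T}(H + kA^\mathtt{T}A)d^k \le 0$, that is,
\[(d^k)^\mathtt{T}H d^k + k\,\|A d^k\|^2 \le 0.\]
It suffices to let $\theta$ range over the positive integers here, since for fixed $d$ the map $\theta \mapsto d^\mathtt{T}(H + \theta A^\mathtt{T}A)d$ is nondecreasing, so a real $\theta$ that works would be dominated by an integer one.

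Since $\{d^k\}$ lies in the compact unit sphere of $\R^n$, pass to a convergent subsequence $d^k \to \bar d$ with $\|\bar d\| = 1$. Set $\beta := \max_{\|d\|=1} |d^\mathtt{T}H d| < \infty$. The displayed inequality gives $k\,\|A d^k\|^2 \le -(d^k)^\mathtt{T}H d^k \le \beta$, hence $\|A d^k\|^2 \le \beta/k \to 0$, and by continuity of $d \mapsto Ad$ we conclude $A\bar d = 0$. On the other hand, the same inequality yields $(d^k)^\mathtt{T}H d^k \le -k\,\|A d^k\|^2 \le 0$ for every $k$, so letting $k \to \infty$ gives $\bar d^\mathtt{T}H\bar d \le 0$. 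But $\bar d$ is a unit vector in $\ker A$, so the hypothesis of the lemma forces $\bar d^\mathtt{T}H\bar d > 0$, a contradiction. Therefore some $\theta \ge 0$ (indeed some positive integer) satisfies $H + \theta A^\mathtt{T}A \succ 0$.

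I do not anticipate a genuine obstacle: the only care needed is negating the claim correctly (``for every $\theta \ge 0$, $H + \theta A^\mathtt{T}A \not\succ 0$''), the reduction to integer $\theta$ just mentioned, and the elementary uniform bound $|d^\mathtt{T}Hd| \le \beta$ on the unit sphere. For completeness I note there is an alternative constructive route: decompose $\R^n = \ker A \oplus \mathrm{range}(A^\mathtt{T})$, observe $A^\mathtt{T}A$ restricted to $\mathrm{range}(A^\mathtt{T})$ is positive definite with least eigenvalue $\sigma > 0$, write $H$ in the corresponding block form, and apply a Schur-complement / Young's-inequality estimate on the off-diagonal block to exhibit an explicit $\theta$; but the compactness argument above is shorter, so that is the version I would write up.
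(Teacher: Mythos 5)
Your compactness argument is correct and is the standard proof of this result (often attributed to Debreu or Finsler); the paper itself gives no proof, only the citation to \citet{debreu1952definite}, so your write-up supplies a valid self-contained argument. The only cosmetic point is that the hypothesis should be read as holding for all \emph{nonzero} $d$ with $Ad=0$ (as your use of the unit vector $\bar d$ implicitly assumes), and the reduction to integer $\theta$ is not actually needed for the contradiction — negating the claim already gives failure at every positive integer.
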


Now we can replace our nonconvex assumptions in Theorem \ref{thm:maxcomp} by the second-order sufficiency condition as follows.

\begin{theorem}\label{thm:maxcomp-suf}
Let $\{(x^k,y^k,s^k,\mu^k)\}\subset\mathbb{R}^n\times\mathbb{R}^m\times\mathbb{R}^m\times\R$ with $\mu^{k}>0$ and $\mu^{k}\to0$ be such that:
\begin{enumerate}
\item $x^k\to x^*$  with $a(x^*)\leq0$ and $s^k\to s^*:=-a(x^*)$,
\item $y^k\geq0$ and $s^k\geq0$ with $y^k\to y^*$ ($y^*$ is necessarily a Lagrange multiplier associated with $x^*$),
\item for some $0<b\leq c$, $\mu^{k}b\leq y_i^ks_i^k$ for all $i: a_i(x^*)=0$ and $(y^k)^\mathtt{T}s^k\leq\mu^{k}c$,
\item for some $u\geq0$, $|a_i(x^k)+s_i^k|\leq\mu^{k} u$ for all $i: a_i(x^*)=0$,
\item for some $d\geq0$, $\|\grad_x \Lag(x^k,y^k)\|\leq d\mu^{k}(\|y^k\|_1+1)$,
\item $f$ and $a$ are twice continuously differentiable and $(x^*,y^*)$ satisfies the sufficient second-order optimality condition \eqref{ssoc}.
\end{enumerate}
Then, $y^*$ is maximally complementary, i.e., $y_i^*>0$ whenever there exists some Lagrange multiplier $\tilde{y}$ associated with $x^*$ with $\tilde{y}_i>0$.
\end{theorem}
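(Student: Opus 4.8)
The plan is to reduce Theorem~\ref{thm:maxcomp-suf} to Theorem~\ref{thm:maxcomp} by showing that the second-order sufficiency condition \eqref{ssoc} implies both of the nonconvex conditions listed in assumption~\ref{ssoc-convex-assumption} of Theorem~\ref{thm:maxcomp}. Since assumptions 1--5 of the two theorems coincide verbatim, everything hinges on verifying assumption~6 of Theorem~\ref{thm:maxcomp}: (i) the uniform bound $\|x^k-x^*\|\le C\sqrt{\mu^k}$, and (ii) the existence of neighborhoods $S$ of $x^*$ and $W$ of $y^*$ on which $\tilde{\Lag}(x,y)$ is star-convex around $x^*$ for every $y\in W$, where $\tilde{\Lag}$ is the extended Lagrangian \eqref{extlag}.

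For part (i), I would invoke Lemma~\ref{lem:distance}. The hypotheses match: $f$ and $a$ are twice differentiable (in fact $C^2$) at the local minimizer $x^*$, and \eqref{ssoc} is exactly the second-order condition required. One needs to check that for $k$ large the triple $(x^k, \hat{y}^k, s^k)$ lands in the neighborhood $\mathcal{B}$, where $\hat{y}^k$ is $y^k$ with the components corresponding to inactive constraints zeroed out; this is fine because $x^k\to x^*$, $s^k\to -a(x^*)$, $y^k\to y^*$, and $y_i^k\to 0$ for $i:a_i(x^*)<0$, so $\hat{y}^k\to y^*$ as well. Lemma~\ref{lem:distance} then gives $\|x^k-x^*\|\le C\sqrt{\max\{\|\grad_x\Lag(x^k,\hat{y}^k)\|, \|[a(x^k)+s^k]_{i:a_i(x^*)=0}\|, (\hat{y}^k)^\mathtt{T}s^k\}}$. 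Each of the three quantities inside the max is $O(\mu^k)$: the first by assumption~5 together with boundedness of $\{y^k\}$ (which follows from convergence $y^k\to y^*$), the second by assumption~4, and the third because $(\hat{y}^k)^\mathtt{T}s^k\le (y^k)^\mathtt{T}s^k\le c\mu^k$ using $s^k\ge 0$ and dropping nonnegative terms. Hence $\|x^k-x^*\|\le C'\sqrt{\mu^k}$, possibly adjusting $\mathrm{grad}_x\Lag(x^k,\hat y^k)$ versus $\grad_x\Lag(x^k,y^k)$ by a term that is $O(\mu^k)$ and thus harmless.

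For part (ii), the idea is that \eqref{ssoc} says $\grad^2_{xx}\Lag(x^*,y^*)$ is positive definite on the subspace (more precisely the cone) annihilated by the active constraint gradients, so by Lemma~\ref{lem:well-known-result} (Debreu) there exists $\theta\ge 0$ with $\grad^2_{xx}\Lag(x^*,y^*) + 2\theta A^\mathtt{T}A \succ 0$, where $A$ is the matrix whose rows are $\grad a_i(x^*)^\mathtt{T}$ for $i:a_i(x^*)=0$. Note that the Hessian of the extra penalty term in \eqref{extlag} is exactly $2\theta A^\mathtt{T}A$, a constant matrix, so $\grad^2_{xx}\tilde{\Lag}(x^*,y^*) \succ 0$. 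By continuity of second derivatives in both $x$ and $y$, there are neighborhoods $S$ of $x^*$ and $W$ of $y^*$ such that $\grad^2_{xx}\tilde{\Lag}(x,y)\succ 0$ for all $(x,y)\in S\times W$ (shrinking $S$ to be convex). A function with positive semidefinite Hessian on a convex set is convex there, and convexity on $S$ around any interior point — in particular around $x^*$ — implies star-convexity around $x^*$ on $S$. This yields precisely the first bullet of assumption~6. One subtlety: the cone condition in \eqref{ssoc} restricts $d$ to $\grad a_i(x^*)^\mathtt{T}d\le 0$ and $\grad f(x^*)^\mathtt{T}d\le 0$, whereas Debreu's lemma wants a genuine subspace $Ad=0$; but $\{d: Ad=0\}\subseteq\{d:\grad a_i(x^*)^\mathtt{T}d\le0\ \forall i\}$, and on the subspace one also has $\grad f(x^*)^\mathtt{T}d\le 0$ forced in both directions only if... — actually the clean route is: strict complementarity is not assumed, so I would instead note that $d^\mathtt{T}\grad^2_{xx}\Lag(x^*,y^*)d>0$ holds at least for all $d\neq 0$ with $Ad=0$ (a subset of the cone in \eqref{ssoc} after checking $\grad f(x^*)^\mathtt{T}d$ is handled — using $\grad_x\Lag(x^*,y^*)=0$ gives $\grad f(x^*) = -A^\mathtt{T}y^*_{\mathrm{act}}$, so $Ad=0\Rightarrow \grad f(x^*)^\mathtt{T}d=0\le0$), which is exactly the hypothesis of Lemma~\ref{lem:well-known-result}.

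The main obstacle is the careful bookkeeping in this last step — making sure the cone in the second-order condition \eqref{ssoc} genuinely contains the subspace needed for Debreu's lemma, and that zeroing out inactive multipliers does not disturb the estimates — rather than any deep difficulty; once assumptions 6(i) and 6(ii) of Theorem~\ref{thm:maxcomp} are in place, the conclusion that $y^*$ is maximally complementary is immediate from that theorem, so the proof closes with a single sentence invoking Theorem~\ref{thm:maxcomp}.
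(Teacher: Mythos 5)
Your proposal is correct and follows essentially the same route as the paper's proof: apply Debreu's lemma plus continuity of the second derivatives to get local convexity (hence star-convexity) of the extended Lagrangian $\tilde{\Lag}$ near $(x^*,y^*)$, and apply Lemma~\ref{lem:distance} to the multipliers with inactive components zeroed out to obtain $\|x^k-x^*\|\le C\sqrt{\mu^k}$, then invoke Theorem~\ref{thm:maxcomp}. Your extra bookkeeping — checking that $\{d:Ad=0\}$ lies in the cone of \eqref{ssoc} via $\grad f(x^*)=-A^\mathtt{T}y^*_{\mathrm{act}}$, and that zeroing the inactive multipliers changes $\grad_x\Lag$ only by an $O(\mu^k)$ term — is exactly what the paper leaves implicit.
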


\begin{proof}
Since the sufficient second-order optimality condition holds at $(x^{*}, y^{*})$ by  Lemma~\ref{lem:well-known-result}, there exists $\theta \geq 0$ such that
$$
\grad^2_{x,x} \tilde{\Lag}(x^{*}, y^{*}) = \grad^2_{x,x} \Lag(x^{*}, y^{*}) + \theta \sum_{i:a_i(x^*)=0} \grad a_i(x^*)^\mathtt{T} \grad a_i(x^*) \succ 0.
$$
It follows that there exists some neighborhood $\mathcal{B}$ of $(x^*,y^*)$ such that $\tilde{\Lag}(x,y)$ is convex on $x$ for all $(x,y)\in\mathcal{B}$.


Also, for $v_i^k:=y_i^k$ if $a_i(x^*)=0$ and $v_i^k:=0$ otherwise, we have $$\|\grad_x\Lag(x^k,v^k)\|\leq\|\grad_x\Lag(x^k,y^k)\|+\|\sum_{i:a_i(x^*)<0}y_i^k\grad a_i(x^k)\|,$$ which is bounded by a non-negative constant times $\mu^{k}$. By Lemma~\ref{lem:distance} we have $\|x^k-x^*\|\leq C\sqrt{\mu^{k}}$ for some constant $C\geq0$. Hence, the result follows by Theorem \ref{thm:maxcomp}.\jo{\qed}
\end{proof}

From the proof of Theorem~\ref{thm:maxcomp-suf} we can see the term $\theta \sum_{i:a_i(x^*)=0}(\grad a_i(x^*)^\mathtt{T}(x - x^{*}))^2$ in \eqref{extlag} is important because it guarantees $\tilde{\Lag}(x,y^{*})$ is convex if the second-order sufficient conditions hold. Conversely, even if the second-order sufficient conditions hold, the Lagrangian $\Lag(x,y^{*})$ may not be convex in a neighborhood of this point. For example, consider the problem $\min{-x^2}$ s.t. $x \ge 0, x \le 0$ at the point $x = 0$; the second-order sufficient conditions are satisfied, but the Lagrangian is not convex in $x$. However, as we show in Theorem~\ref{thm:maxcomp-suf}, the second-order sufficient conditions imply the nonconvex case of assumption~\ref{ssoc-convex-assumption} of Theorem~\ref{thm:maxcomp}.

Now that Theorem~\ref{thm:maxcomp} and \ref{thm:maxcomp-suf} are proved, we discuss possible extensions. 
When there are additional constraints $\tilde{a}_i(x)\leq0, i=1,\dots,\tilde{m}$ that are known to have a strict interior (for instance, if they represent simple bounds on the variables), a common implementation choice is to maintain feasibility for these constraints at each iteration, instead of considering the slow reduction of feasibility suggested by \eqref{seq-ipm-eq:primal-rate}. Note that assumption $4$ of Theorem \ref{thm:maxcomp} is weaker than \eqref{seq-ipm-eq:primal-rate} and includes the possibility of keeping $\tilde{a}_i(x^k)+s_i^k=0, i=1,\dots,\tilde{m}$, at each iteration. With respect to the results of Theorems \ref{convexbound} and \ref{thm:nonconvex-bound-lag}, one may weaken their assumption $4$ in order to consider the case $\tilde{a}_i(x^k)+s_i^k=0, i=1,\dots,\tilde{m}$, by strengthening the corresponding assumption $5$ by replacing the term $\|y^k\|_1$ on the bound of $\|\grad_x\Lag(x^k,y^k)\|$  (which includes all dual multipliers) by the possibly smaller sum of the multipliers associated only with the original constraints $a_i(x)\leq0$.

\section{When things may fail}\label{negative}

We now limit our results to the convex case, where we explore the possibility of \eqref{seq-ipm-eq:primal-rate} not being satisfied (i.e., the constraint violation is not reduced at the same rate as complementarity).

In the following theorem, we show that controlling the constraint violation rate is essential for the boundedness of the dual sequence. In fact, we show that if the constraint violation reduces faster than the barrier parameter $\mu^{k}$, the dual sequence is unbounded whenever the constraints are convex and the set of Lagrange multipliers is unbounded. We note that a similar result was already known when the functions $f$ and $a$ are linear \cite[Theorem 4]{mizuno1995surface}.

\begin{theorem}
\label{thm:divergence1}
Assume that $a$ is convex and the feasible region has empty interior. Let $\{(x^k,y^k,s^k,\mu^k)\}\subset\mathbb{R}^n\times\mathbb{R}^m\times\mathbb{R}^m\times\R$ with $\mu^{k}>0$ for all $k$ and $\mu^{k}\to0$ be such that:
\begin{enumerate}
\item $x^k\to x^*$  with $a(x^*)\leq0$ and $s^k\to-a(x^*)$,
\item $y^k\geq0$ and $s^k\geq0$,
\item for some $b>0$, $\mu^{k}b\leq y_i^ks_i^k$ for all $i: a_i(x^*)=0$,
\item $\frac{a_i(x^k)+s_i^k}{\mu^{k}}\to0$ for all $i: a_i(x^*)=0$.
\end{enumerate}
Then $\{y^k\}$ is unbounded.
\end{theorem}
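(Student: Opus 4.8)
The plan is to extract from the empty-interior hypothesis a single fixed nonzero vector $\lambda\ge 0$ with $\lambda^\mathtt{T}a(x)\ge 0$ for every $x$, supported on the constraints active at $x^*$, and then to show that if $\{y^k\}$ were bounded this inequality would be violated along the sequence.

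First I would note that, since each $a_i$ is continuous, $\{x: a(x)<0\}$ is open and contained in the feasible region, so the empty-interior assumption forces it to be empty: there is no strictly feasible point. Applying the convex analogue of Gordan's theorem (Fan, Glicksberg, and Hoffman; equivalently, Sion's minimax theorem applied to $\phi(x,\lambda):=\lambda^\mathtt{T}a(x)$ over $\R^n$ and the unit simplex, where $\inf_x\max_i a_i(x)\ge 0$ precisely because no strictly feasible point exists) yields $\lambda\in\R^m$ with $\lambda\ge 0$, $\lambda\neq 0$, and $g(x):=\lambda^\mathtt{T}a(x)\ge 0$ for all $x\in\R^n$. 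Evaluating at $x^*$, where $a(x^*)\le 0$ and $\lambda\ge0$, gives $g(x^*)\le 0$, hence $g(x^*)=0$, so $\lambda_i a_i(x^*)=0$ for every $i$; therefore $\lambda$ is supported on the active set $I:=\{i: a_i(x^*)=0\}$ and $\sum_{i\in I}\lambda_i=\|\lambda\|_1>0$ (in particular $I\neq\emptyset$, as it must be).

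Next I would argue by contradiction. Suppose $\|y^k\|_\infty\le Y$ for all $k$; necessarily $Y>0$, since otherwise $y^k\equiv 0$ and assumption 3 fails on $I\neq\emptyset$. For $i\in I$, assumption 3 gives $y_i^k s_i^k\ge b\mu^k>0$, so $y_i^k>0$ and $s_i^k\ge b\mu^k/y_i^k\ge b\mu^k/Y$. Writing $a_i(x^k)=(a_i(x^k)+s_i^k)-s_i^k$ and using that $\lambda$ is supported on $I$,
\begin{equation*}
0\le g(x^k)=\sum_{i\in I}\lambda_i\bigl(a_i(x^k)+s_i^k\bigr)-\sum_{i\in I}\lambda_i s_i^k\le \sum_{i\in I}\lambda_i\bigl(a_i(x^k)+s_i^k\bigr)-\frac{b\|\lambda\|_1}{Y}\mu^k .
\end{equation*}
By assumption 4 the remaining sum $\sum_{i\in I}\lambda_i(a_i(x^k)+s_i^k)$ is $o(\mu^k)$, while $b\|\lambda\|_1/Y$ is a fixed positive constant, so for all large $k$ the right-hand side is at most $-\tfrac{b\|\lambda\|_1}{2Y}\mu^k<0$, contradicting $g(x^k)\ge 0$. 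Hence $\{y^k\}$ must be unbounded.

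I expect the only genuine obstacle to be the first step — producing $\lambda$ from the emptiness of the interior — which is exactly where convexity of $a$ is used, and which is the only place it is needed; it is a standard fact about convex inequality systems. Everything after that is elementary bookkeeping with assumptions 1--4, and, notably, neither convexity of $f$ nor any gradient or first-order information about the iterates enters the argument.
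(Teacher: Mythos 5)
Your proof is correct. The overall strategy coincides with the paper's: produce a nonzero vector $\lambda\ge 0$ supported on the active set $I$ with $\lambda^\mathtt{T}a(x^k)\ge 0$, lower-bound $s_i^k$ for $i\in I$ via assumption~3, and play this off against assumption~4. The difference lies in how the vector is produced and in the final step. The paper observes that empty interior rules out a direction $d$ with $\grad a_i(x^*)^\mathtt{T}d<0$ for all active $i$, applies the (linear) Gordan/Farkas alternative to the gradients at $x^*$ to get $\hat y\ge0$, $\hat y\ne0$ with $\sum_i\hat y_i\grad a_i(x^*)=0$ and $a(x^*)^\mathtt{T}\hat y=0$, and then uses the gradient inequality for convex $a_i$ to conclude $\hat y^\mathtt{T}a(x^k)\ge 0$; it then exhibits a specific component $y_i^k\to+\infty$ directly rather than by contradiction. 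You instead invoke the nonlinear convex theorem of the alternative (Fan--Glicksberg--Hoffman, or Sion's minimax over the simplex) to get $\lambda^\mathtt{T}a(x)\ge 0$ globally in one stroke, and deduce the support condition by evaluating at $x^*$. Your route buys slightly more generality (no differentiability of $a$ is needed for this step, and no linearization argument about interior points), at the cost of citing a less elementary separation theorem; the paper's route stays within first-order calculus and yields the marginally sharper conclusion that a particular coordinate of $y^k$ diverges. All the bookkeeping in your contradiction argument --- the positivity of $Y$, the bound $s_i^k\ge b\mu^k/Y$, and the $o(\mu^k)$ estimate from assumption~4 --- checks out.
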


\begin{proof}
Note that there is no $d\in\mathbb{R}^n$, $d \neq 0$ with $\grad a_i(x^*)^\mathtt{T} d<0$ for all $i: a_i(x^*)=0$, otherwise, $x^*+td$ would be interior for $t>0$ sufficiently small. By Farkas's Lemma, there is some $\hat{y}\in\mathbb{R}^m$ with $\hat{y}\geq0, \hat{y}\neq0$, $a(x^*)^\mathtt{T}\hat{y}=0$ and $\sum_{i=1}^m\hat{y}_i\grad a_i(x^*)=0$.
For all $i$, we have $a_i(x^k)\geq a_i(x^*)+\grad a_i(x^*)^\mathtt{T}(x^k-x^*)$ and hence $a(x^k)^\mathtt{T}\hat{y}\geq a(x^*)^\mathtt{T}\hat{y}+\sum_{i=1}^m\hat{y}_i\grad a_i(x^*)^\mathtt{T}(x^k-x^*)=0$. Thus, $$\hat{y}^\mathtt{T}(a(x^k)+s^k)=\hat{y}^\mathtt{T}a(x^k)+\hat{y}^\mathtt{T}s^k\geq\hat{y}^\mathtt{T}s^k.$$
Take $i$ such that $\hat{y}_i>0$ and we have $$0<\mu^{k}b\hat{y}_i\leq y_i^ks_i^k\hat{y}_i\leq y_i^k\hat{y}^\mathtt{T}s^k\leq y_i^k\hat{y}^\mathtt{T}(a(x^k)+s^k).$$ Then, $\hat{y}^\mathtt{T}(a(x^k)+s^k)>0$ and $y_i^k\geq b\hat{y}_i\frac{\mu^{k}}{\hat{y}^\mathtt{T}(a(x^k)+s^k)}\to+\infty$.\jo{\qed}
\end{proof}

The next theorem shows that the dual sequence can have a poor quality in terms of maximal complementarity if constraint violation is not reduced fast enough. We prove that in this instance the dual sequence limits to a point with minimal complementarity.

\begin{theorem}\label{mincomp}
Let $f$ and $a_i$ for $i=1,\dots,m$ be convex functions and $\{(x^k,y^k,s^k,\mu^k)\}\subset\mathbb{R}^n\times\mathbb{R}^m\times\mathbb{R}^m\times\R$ with $\mu^{k}>0$ and $\mu^{k}\to0$ be such that:
\begin{enumerate}
\item $x^k\to x^*$  with $a(x^*)\leq0$ and $s^k\to s^*:=-a(x^*)$,
\item $y^k\geq0$ and $s^k\geq0$ with $y^k\to y^*$ ($y^*$ is necessarily a Lagrange multiplier associated with $x^*$),
\item for some $c\geq0$, $(y^k)^\mathtt{T}s^k\leq\mu^{k}c$,
\item $0\leq a_i(x^k)+s_i^k$ for all $i: a_i(x^*)=0$,
\item for some $d\geq0$, $\|\grad_x \Lag(x^k,y^k)\|\leq d\mu^{k}(\|y^k\|_1+1)$.

\end{enumerate}
Let $\tilde{y}\in\R^m$ be some Lagrange multiplier associated with $x^*$ such that  for all $i: a_i(x^*)=0$, \begin{itemize}
\item[\textbullet] $\frac{a_i(x^k)+s_i^k}{\mu^{k}}\to+\infty$ when $\tilde{y}_i=0$, and 
\item[\textbullet] $a_i(x^k)+s_i^k\leq u\mu^{k}\mbox{ or }y_i^k\geq\tilde{y}_i$ when $\tilde{y}_i>0$,
\end{itemize}
for some $u\geq0$.
Then, $y_i^*=0$ whenever $\tilde{y}_i=0$. In particular, if $\tilde{y}$ is minimally complementary, that is, it has a minimal number of non-zero elements, then $y^*$ is also minimally complementary.
\end{theorem}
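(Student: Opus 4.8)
The strategy is to run the convexity argument from the proof of Theorem~\ref{thm:maxcomp} in the reverse direction: instead of lower-bounding $\sum_{i:a_i(x^*)=0}\tilde y_i/y_i^k$, I upper-bound a residual-weighted sum of the $y_i^k$. Since $f$ and the $a_i$ are convex and $\tilde y$ is a Lagrange multiplier, $x^*$ minimizes $\Lag(\cdot,\tilde y)$, so exactly as in \eqref{eq:lag-inequality-final},
\[
(a(x^k)-a(x^*))^{\mathtt T}(y^k-\tilde y)\ \le\ \grad_x\Lag(x^k,y^k)^{\mathtt T}(x^k-x^*).
\]
The right-hand side is at most $d\mu^k(\|y^k\|_1+1)\|x^k-x^*\|=O(\mu^k)$ since $y^k\to y^*$ and $x^k\to x^*$. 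On the left, for $i$ with $a_i(x^*)<0$ we have $\tilde y_i=0$ and, because $s_i^k\to-a_i(x^*)>0$, $y_i^k\le\mu^k c/s_i^k=O(\mu^k)$, so those terms contribute $O(\mu^k)$; hence $\sum_{i:a_i(x^*)=0}a_i(x^k)(y_i^k-\tilde y_i)\le C_1\mu^k$ for a constant $C_1$.

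Next I would set $r_i^k:=a_i(x^k)+s_i^k\ge0$ (assumption~4) for the active indices and write $a_i(x^k)=r_i^k-s_i^k$. Since $s_i^k,y_i^k,\tilde y_i\ge0$ and $(y^k)^{\mathtt T}s^k\le\mu^k c$, we have $\sum_{i:a_i(x^*)=0}s_i^k(y_i^k-\tilde y_i)\le\sum_i s_i^ky_i^k\le\mu^k c$, which moves the $s_i^k$ contribution to the right-hand side and yields $\sum_{i:a_i(x^*)=0}r_i^k(y_i^k-\tilde y_i)\le C_2\mu^k$.

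Now split the active indices into $I_0=\{i:a_i(x^*)=0,\ \tilde y_i=0\}$ and $I_+=\{i:a_i(x^*)=0,\ \tilde y_i>0\}$. For $i\in I_+$ the hypothesis supplies a dichotomy for each $k$: either $a_i(x^k)+s_i^k\le u\mu^k$, whence $r_i^k(y_i^k-\tilde y_i)\ge-r_i^k\tilde y_i\ge-u\mu^k\tilde y_i$ (using $r_i^ky_i^k\ge0$), or $y_i^k\ge\tilde y_i$, whence $r_i^k(y_i^k-\tilde y_i)\ge0$; either way $r_i^k(y_i^k-\tilde y_i)\ge-u\mu^k\tilde y_i$, so $\sum_{i\in I_+}r_i^k(y_i^k-\tilde y_i)\ge-u\mu^k\|\tilde y\|_1$. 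For $i\in I_0$ the summand equals $r_i^ky_i^k\ge0$. Combining, $\sum_{i\in I_0}r_i^ky_i^k\le(C_2+u\|\tilde y\|_1)\mu^k$, i.e., $\sum_{i\in I_0}(r_i^k/\mu^k)y_i^k$ stays bounded; since $r_i^k/\mu^k\to+\infty$ for each $i\in I_0$ and $y_i^k\ge0$, this forces $y_i^k\to0$, so $y_i^*=0$. Together with $y_i^*=0$ for $i$ with $a_i(x^*)<0$, this proves $y_i^*=0$ whenever $\tilde y_i=0$. The ``in particular'' claim then follows because the set of nonzero components of $y^*$ is contained in that of $\tilde y$ while $y^*$ is itself a Lagrange multiplier, so minimality of $\tilde y$ forces equality.

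I expect the case analysis on $I_+$ to be the main obstacle: assumption~4 gives only the one-sided bound $r_i^k\ge0$, so the individual terms $r_i^k(y_i^k-\tilde y_i)$ are not controlled \emph{a priori}, and the whole argument hinges on verifying that in both branches of the stated dichotomy these terms are bounded below by $-O(\mu^k)$ — otherwise they cannot be absorbed into the right-hand side without spoiling the $O(\mu^k)$ bound that lets the divergence $r_i^k/\mu^k\to\infty$ force $y_i^k\to0$ on $I_0$.
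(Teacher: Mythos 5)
Your proof is correct and follows essentially the same route as the paper: the bound $\sum_{i:a_i(x^*)=0}a_i(x^k)(y_i^k-\tilde y_i)\leq C\mu^k$ imported from the convex case of Theorem~\ref{thm:maxcomp}, the decomposition $(a_i(x^k)+s_i^k)(y_i^k-\tilde y_i)=s_i^ky_i^k+a_i(x^k)(y_i^k-\tilde y_i)-s_i^k\tilde y_i$ together with $(y^k)^{\mathtt T}s^k\leq\mu^k c$ and $s_i^k\tilde y_i\geq0$, and then the dichotomy on the indices with $\tilde y_i>0$ forcing $y_i^k\to0$ where $(a_i(x^k)+s_i^k)/\mu^k\to\infty$. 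Your write-up merely makes explicit the final "the result follows" step that the paper leaves to the reader, and does so correctly.
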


\begin{proof}
Let $\tilde{y}$ be a Lagrange multiplier associated with $x^*$. We have



$$\sum_{i:a_i(x^*)=0}\frac{1}{\mu^{k}} (a_i(x^k)+s_i^k)(y_i^k-\tilde{y}_i)=\frac{1}{\mu^{k}}\sum_{i:a_i(x^*)=0}s_i^k{y}_i^k+a_i(x^k)(y_i^k-\tilde{y}_i)-s_i^k\tilde{y}_i.$$
Since $s_i^k\tilde{y}_i\geq0$, $(s^k)^\mathtt{T}y^k\leq\mu^{k}c$ and, from the proof of Theorem \ref{thm:maxcomp}, $a_i(x^k)(y_i^k-\tilde{y}_i)\leq C\mu^{k}$ for some $C\geq0$, we have
$$\sum_{i:a_i(x^*)=0}\frac{a_i(x^k)+s_i^k}{\mu^{k}}(y_i^k-\tilde{y}_i)\leq c+C,$$
and the result follows.\jo{\qed}
\end{proof}

If assumption $4$ in Theorem \ref{mincomp} is replaced by a similar one with a strict inequality, and assumption $5$ is replaced by $\grad\Lag(x^k,y^k)^\mathtt{T}(x^k-x^*)\leq d\mu^{k}$ for some $d\geq0$, then we can drop the assumption that $\{y^k\}$ is convergent. It will then follow that $\{y^k\}$ is bounded, and any limit point $y^*$ will have the property stated in the theorem.

In the next section we investigate the numerical behavior of the dual sequences generated by IPOPT on the NETLIB collection.

\section{Numerical experiments}\label{sec:experiments}

In this section, we contrast a well-behaved IPM, the one-phase IPM \cite{hinder2018one} that satisfies \eqref{seq-ipm-eq:nice-IPM}, with IPOPT, an IPM that \emph{tends} to moves the primal feasibility faster than \eqref{seq-ipm-eq:nice-IPM} would suggest. Empirically, we demonstrate on both linear and nonlinear programs that IPOPT has issues with the dual multiplier norms exploding, but the one-phase IPM does not. This demonstrates that our theory has practical implications for the design of IPMs. 


Many IPM codes, such as IPOPT, keep $\frac{s_i y_i}{\mu}$ bounded below and require an inequality similar to
$$
\| \grad_{x} \Lag(x,y) \| +  \| a(x) + s \| + \max_{i} s_i y_i \le \mu ( 1 + \| y \| )
$$
to hold before $\mu$ is decreased \cite[Algorithm~19.1]{nocedal2006numerical}. Hence assumptions 3--5 of Theorem~\ref{thm:maxcomp-suf} hold, and it follows that the IPM iterates are likely to converge to a maximal complementarity solution. 

Our tests do not include IPMs that risk not tending to a minimal complementarity solution, i.e., reduce the constraint violation slower than perturbed complementarity. However, such IPMs certainly could be artificially created. This phenomenon might also occur naturally, for example, in dual regularized IPMs \cite{altman1999regularized} or $\ell_2$-penalty IPMs \cite{chen2006interior} \emph{if} the algorithm is not well-designed.


The code for replicating our results can be found at \url{https://github.com/ohinder/Lagrange-multipliers-behavior.jl}. We test on the NETLIB test set of real linear programs in Section~\ref{sec:linear-programs} and then on three toy nonconvex programs in Section~\ref{sec:nonconvex-programs}.

\subsection{Linear programs}\label{sec:linear-programs}

The focus of this section is showing that on the NETLIB test set -- of real linear programming problems -- IPMs such as IPOPT, that aggressively reduce the primal feasibility, will have unnecessarily large dual iterates. As we discussed in the introduction, the convergence analysis of IPOPT and many other nonlinear optimization solvers \cite{byrd2000trust,wachter2005line} assumes that the set of dual multipliers at the convergence point is bounded to guarantee that the dual multipliers do not diverge. One natural question is whether these assumptions are valid on a test set like NETLIB. As documented in Table~\ref{tbl:strict-interior} in the appendix, we find that $64$ of the $95$ linear programs we tested lack a strict relative interior, and therefore Mangasarian-Fromovitz constraint qualification fails to hold. See the Appendix for more details on the experiments.

The next natural question is to check if the violation of these assumptions translates into undesirable behavior on these test problems. Consider Figure~\ref{fig:netlib-example} where we plot the performance of IPOPT on the problem ADLITTLE from the NETLIB collection. As our theory predicts when the primal feasibility is reduced faster than complementarity, the dual variables increase substantially. When IPOPT's default perturbation strategy is used, while the final dual variable value is only $3 \times 10^3$, the maximum dual variable value still spikes to $4 \times 10^7$ on iteration $22$. This contrasts with the one-phase IPM \cite{hinder2018one} that smoothly reduces the constraint violation, dual feasibility, and complementarity; consequently, the maximum dual variable follows a smooth trajectory.

\begin{figure}
\centering
\includegraphics[scale=0.55]{\figuresDir{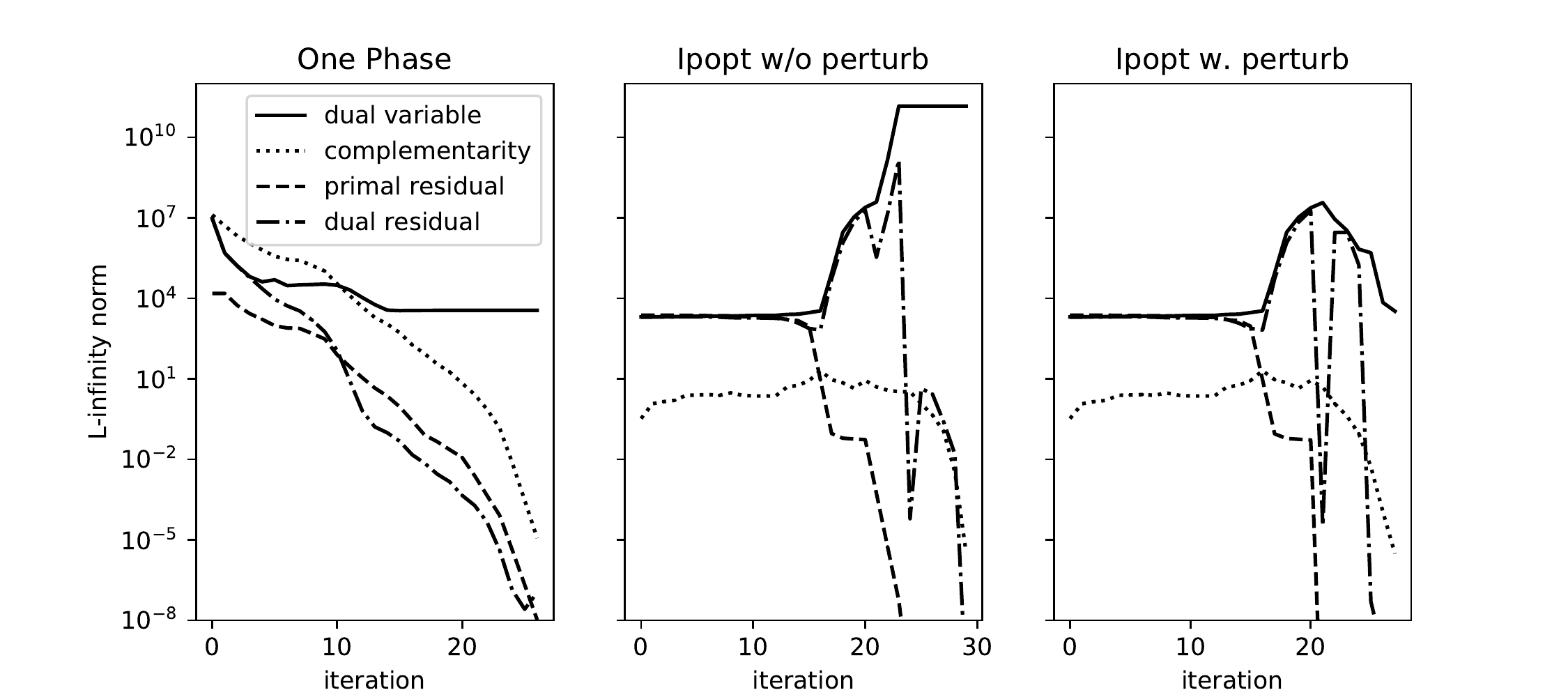}}
\caption{Comparison of the iterates of different IPMs on the NETLIB problem ADLITTLE.}\label{fig:netlib-example}
\end{figure}


Next, we show that this phenomenon occurs across the whole NETLIB test set. We run these IPMs on the NETLIB problems with less than $10,000$ non-zero entries and record the maximum dual variable value (across all the IPMs iterates). All solvers successfully terminate, within the maximum number of iterations of \maxIter, on $56$  of the $68$ problems. See the Appendix for further details. Figure~\ref{fig:dual-maxes-netlib} plots an empirical cumulative distribution over the maximum dual variable for each solver. In particular, for each solver, it plots the function $g : [0,1] \rightarrow \R$ where $g(\theta)$ is the maximum dual variable value of the problem, for which, exactly a $\theta$ proportion of the problems have a smaller or equal maximum dual variable value. The plot illustrates that the maximum dual variable of IPOPT in the last few iterations (either with or without the default perturbation) is unnecessarily large for most problems that lack a strict relative interior.

\begin{figure}[H]
\centering
\includegraphics[scale=0.6]{\figuresDir{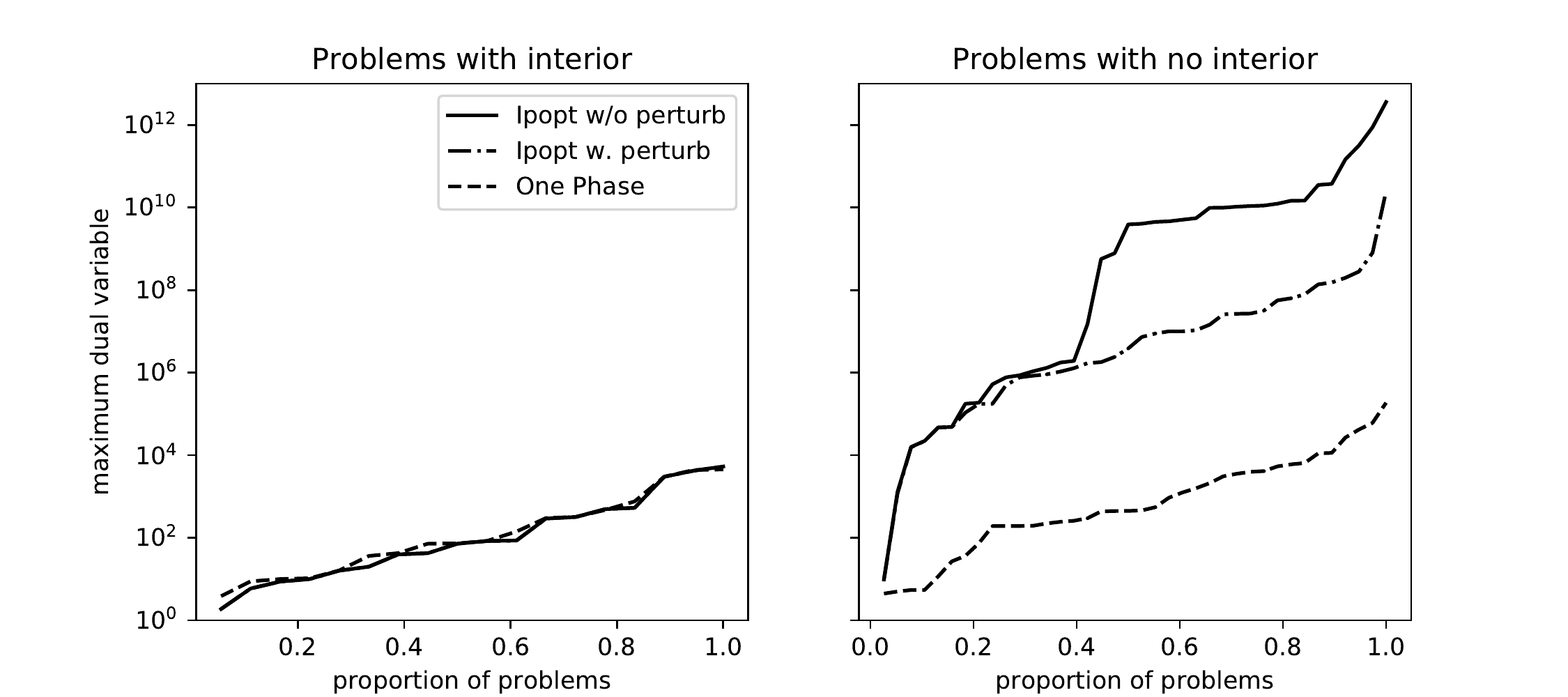}}
\caption{Comparison of the maximum dual variable value over the last 20\% of iterations for different IPMs on the NETLIB collection.}\label{fig:dual-maxes-netlib}
\end{figure}

\subsection{Nonconvex programs}\label{sec:nonconvex-programs}

This section focuses on nonconvex programs. We test IPOPT and the one-phase IPM on three toy examples. The results for these examples are given in Table~\ref{nonlinear-table}, and we believe validate the utility of our theory. Additional figures displaying the algorithm trajectories are given in Appendix~\ref{sec:nonconvex-details}. The first two problems were chosen to satisfy the assumptions of our theory. The final problem gives an example, derived from issues encountered in drinking water network optimization, where dual multipliers exploding is a practical issue.

\paragraph{Intersection of two circles.} This problem is written as
\begin{subequations}\label{circle-equality-example-formulation}
\begin{flalign}
\minimize & -(x_1-1)^2 + x_2^2 \\
\subjectTo & ~x_1^2 + x_2^2 \le 1 \\
 & ~(x_1-2)^2 + x_2^2 \le 1.
\end{flalign}
\end{subequations}
The constraints require the solution to lie in the intersection of two circles, and the objective is a nonconvex quadratic. At the optimal solution (and only feasible solution) given by $x_1 = 1$, $x_2=0$, the Mangasarian-Fromovitz constraint qualification (MFCQ) does not hold. However, the point is a KKT point. Furthermore, the set of dual multipliers corresponding to this KKT point, contains both the point $(1,1)$ which satisfies strict complementarity and the point $(0,0)$ which does not satisfy strict complementarity. 

As we show next, this problem satisfies the assumptions of Theorem~\ref{thm:nonconvex-bound-lag} and Theorem~\ref{thm:maxcomp-suf} at the point $x_1 = 1$, $x_2=0$. A picture of this problem is given in Figure~\ref{fig:circle}.
\begin{figure}
\includegraphics[scale=0.23]{\figuresDir{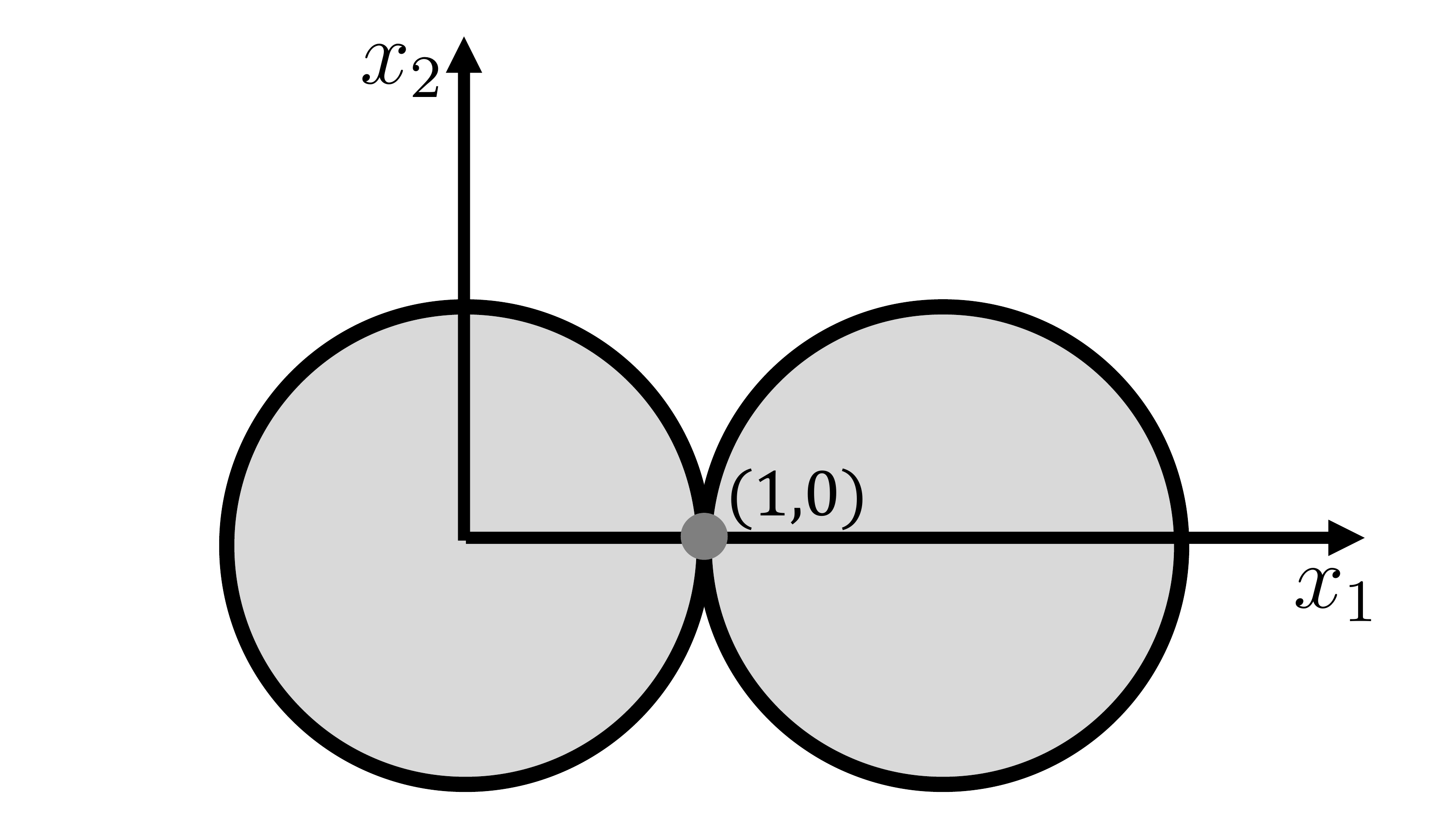}}
\caption{Picture of the circle intersection problem given in \eqref{circle-equality-example-formulation}}\label{fig:circle}
\end{figure}
Next, we verify that the assumptions of Theorem~\ref{thm:nonconvex-bound-lag} are met. Recall $x^{*} = (1,0)$. Observe, the Lagrangian, its gradient and its Hessian are
\begin{flalign*}
\Lag(x,y) &= -(x_1-1)^2 +  y_1 (x_1^2 + x_2^2 - 1) + y_2 ((x_1-2)^2 + x_2^2 - 1)  \\
\frac{\partial \Lag(x,y)}{\partial x_{1}} &=-2 (x_1-1) +  2 y_1 x_1 + 2  y_2 (x_1 - 2)  \\
\frac{\partial \Lag(x,y)}{\partial x_{2}} &=    2 (y_1 + y_2) x_2 \\
\grad_{xx}^2 \Lag(x,y) &= 
2 \begin{pmatrix} y_1 + y_2 - 1 & 0 \\
0 & y_1 + y_2 + 1
 \end{pmatrix}.
\end{flalign*}
From this we observe assumption 6 holds with $y^{*} = (1,1)$ and $\grad_{x} \Lag(x^{*},y^{*}) = 0$. Furthermore, from $\grad_{xx}^2 \Lag(x,y)$ we deduce $\Lag(x,y)$ is convex in $x$ if $y_1 + y_2 \ge 1$. This verifies assumption~7 of Theorem~\ref{thm:nonconvex-bound-lag} with $\theta = 0$. The remaining assumptions of Theorem~\ref{thm:nonconvex-bound-lag} are naturally satisfied by the one-phase IPM.

Next, we verify the assumptions of Theorem~\ref{thm:maxcomp-suf}. Since $y^k$ is bounded, there exists a convergent subsequence with limit $y^{*}$, where $y^{*}$ satisfies $\grad_{x} \Lag(x^{*}, y^{*}) = 0$. Furthermore, $d^T \grad_{xx}^2 \Lag(x^{*},y^{*}) d \ge \lambda \| d \|_2^2$ on the null space of the Jacobian of the constraints ($d_1 = 0$).

\paragraph{Linear program with complementarity constraints.} This problem is written as
\begin{subequations}\label{comp-example-formulation}
\begin{flalign}
\minimize & ~ 3 x_1 - 2 x_2 \\
\subjectTo & ~ x_1 + 3 x_2 \le 2 \\
& ~ x_1 x_2 \le 0 \\
& ~ x_1, x_2 \ge 0.
\end{flalign}
\end{subequations}
At the unique local optima given by $x_1 = 2$ and $x_2 = 0$, MFCQ does not hold. However, the point is a KKT point.
It is straightforward to see that this problem satisfies the assumptions of Corollary~\ref{coro:mpec-bound-lag} and Theorem~\ref{thm:maxcomp-suf}. The fact that Corollary~\ref{coro:mpec-bound-lag} holds is immediate because linear functions are convex. Theorem~\ref{thm:maxcomp-suf} requires verifying the second-order sufficient conditions hold. They do because the null space of the Jacobian of the constraints evaluated at the solution $x_1 = 2$, $x_2 = 0$ only contains zero.

Therefore, our theory proves that for the one-phase IPM the dual multipliers remain bounded and strict complementarity holds for both the `intersection of two circles' and `linear program with complementarity constraints' problems. Table~\ref{nonlinear-table} demonstrates this behavior is seen in practice. Table~\ref{nonlinear-table} also shows IPOPT has issues with the dual multiplier values exploding on these problems. Both solvers maintain strict complementarity for these problems.

\paragraph{Drinking water network optimization.} The final example is a toy drinking water network optimization problem (see \cite{burgschweiger2009optimization} for the formulation of real drinking water network optimization problems as nonlinear programs). The aim is to choose the minimum inlet pressure to ensure that minimum node pressures and demand for water are met. We stumbled across this example when experimenting with our one-phase IPM \cite{hinder2018one} on real drinking water networks. A diagram representing the water network is given in Figure~\ref{fig:water}.
\begin{subequations}\label{drink-example-formulation-1}
\begin{flalign}
\minimize & ~h_1 \\
\subjectTo & ~x_{1,2} + x_{1,3} = 2 \label{eq:supply-1} \\
& ~x_{1,2} + x_{2,3} = 1 \label{eq:demand-2} \\
& ~x_{1,3} = 1 \label{eq:demand-3} \\
& ~x_{1,2}^{1.8} = h_1 - h_2 \label{eq:loss-1} \\
& ~x_{1,3}^{1.8} = h_1 - h_3 \label{eq:loss-2} \\
& ~x_{2,3}^{1.8} = h_2 - h_3 \label{eq:loss-3} \\
& ~h_1, h_2, h_3, x_{1,2}, x_{1,3}, x_{2,3} \ge 0
\end{flalign}
Equation~\eqref{eq:supply-1} states that 2 units of water are available at node $1$. Equation~\eqref{eq:demand-2} and \eqref{eq:demand-3} states that 1 unit of water is demanded at both node $2$ and node $3$. Finally, \eqref{eq:loss-1}, \eqref{eq:loss-2}, \eqref{eq:loss-3} represent the pressure loss in pipes due to friction. Our objective minimizing the inlet pressure is equivalent to minimizing the shafting speed of a variable speed pump at node $1$.

\end{subequations}
The optimal solution (and unique local minimizer) occurs at $x_{1,2} = 1, x_{1,3} = 1, x_{2,3} = 0, h_1 = 0.29, h_2 = 0, h_3 = 0$. At this point, MFCQ fails but nonetheless the solution is a KKT point. We included this problem because it corresponds to a `physically meaningful' problem where MFCQ fails to hold at the optimal solution\footnote{A popular misconception is that when the constraints of a optimization problem are defined by `physics', MFCQ always holds. This is a nice counter-example.}. Table~\ref{nonlinear-table} shows that the one-phase IPM keeps the dual variables bounded but fails to maintain strict complementarity for this problem. On the other hand, IPOPT seems to have issues with both the dual multipliers exploding and strict complementarity failing.
\begin{figure}[H]
\includegraphics[scale=0.25]{\figuresDir{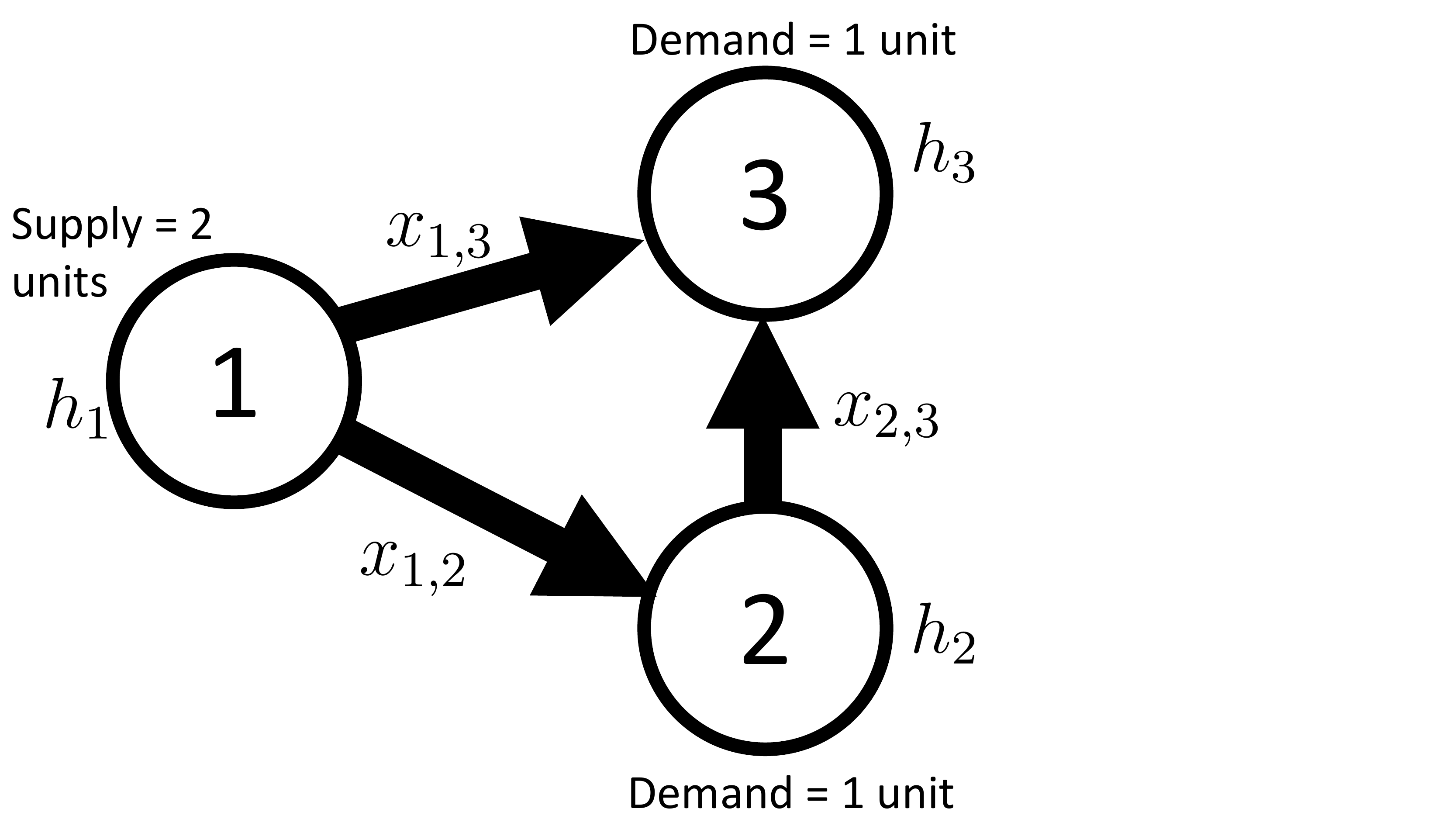}}
\caption{Picture of the drinking water network optimization problem given in \eqref{drink-example-formulation-1}. Flows across the edges are given by the $x$ variables and pressures at nodes by the $h$ variables.}
\label{fig:water}
\end{figure}

\begin{table}[H]
\centering
\begin{tabular}{c c c  c }
\bottomrule
\multicolumn{4}{c}{Intersection of two circles} \\ 
solvers & iterations & max dual & strict complementarity\\ 
 \hline
One Phase & 6 & $ 1.2 $ & $ 1.2 $\\ 
 Ipopt w/o perturb & 26 & $ 6.6 \times 10^{9} $ & $ 4.0 \times 10^{8} $\\ 
 Ipopt w. perturb & 19 & $ 7.3 \times 10^{6} $ & $ 9.1 $\\ 
\bottomrule
\multicolumn{4}{c}{Linear program with complementarity constraints} \\ 
solvers & iterations & max dual & strict complementarity\\ 
 \hline
One Phase & 8 & $ 1.2 \times 10^{1} $ & $ 8.5 \times 10^{-1} $\\ 
 Ipopt w/o perturb & 28 & $ 5.8 \times 10^{9} $ & $ 2.0 $\\ 
 Ipopt w. perturb & 25 & $ 3.6 \times 10^{5} $ & $ 2.0 $\\ 
\bottomrule
\multicolumn{4}{c}{Drinking water network optimization} \\ 
solvers & iterations & max dual & strict complementarity\\ 
 \hline
One Phase & 9 & $ 1.7 $ & $ 2.9 \times 10^{-1} $\\ 
 Ipopt w/o perturb & 6* & $ 1.2 \times 10^{4} $ & $ 1.1 \times 10^{-7} $\\ 
 Ipopt w. perturb & 44* & $ 1.9 \times 10^{3} $ & $ 1.2 \times 10^{-6} $\\ 
 \bottomrule
\end{tabular} 
\caption{A selection of nonlinear programming problems for testing dual multiplier behavior. Suppose the algorithm is generating a sequence of primal iterates $x^k$, slack iterates $s^k$, and dual iterates $y^k$. `Max dual' refers to the value $\| y^k \|_{\infty}$ over the last 20\% of iterations. `Strict complementarity' refers to the minimum value of $\min_i y^k_i + s^k_i$ over the last 20\% of iterations. 
A * indicates on these problems the `dual multiplier calculator in IPOPT failed and therefore the algorithm terminated unsuccessfully. See Section~\ref{sec:nonconvex-details} for plots of IPM trajectories for these problems.
}\label{nonlinear-table}
\end{table}

\section{Final remarks}\label{conclusions}

We demonstrated that carefully controlling both primal feasibility and the barrier parameter are important when designing IPMs to ensure the dual multipliers are well-behaved. In the linear programming community, there was awareness of this issue \cite{mizuno1995surface}, and thus, many implemented IPMs move primal feasibility and complementarity at the same rate \cite{andersen2000mosek,mehrotra1992implementation}. However, in the general nonlinear programming community, there is a lack of awareness of this issue. Consequently, there are few papers (e.g., \citet*{hinder2018one}) that consider the relative rate of reduction of primal feasibility and complementarity. 

\section{Acknowledgements}

We would like to thank the anonymous referees for their helpful feedback, and Michael Saunders for carefully proof reading the paper.

\si{
\bibliographystyle{apalike}
}
\jo{
\bibliographystyle{plainnat}
}
\bibliography{library-one-phase-2.bib}

\newpage
\appendix
\section{Experimental details}\label{app:experiment-details}

The code for the experiments can be found at \url{https://github.com/ohinder/Lagrange-multipliers-behavior}.

\subsection{Solvers}

\paragraph{One-phase solver.} For the well-behaved interior point solver, given a problem of the form
\begin{flalign*}
\minimize &f(x) \\
\subjectTo &c(x) = 0 \\
&x_{L} \le x \le x_{U},
\end{flalign*}
we can re-write the constraints as 
\begin{flalign*}
\minimize &f(x) \\
\subjectTo &c(x) \le 0 \\
-&c(x) \le 0 \\
&x_{L} \le x \le x_{U}.
\end{flalign*}
This gives a problem of the form
\begin{flalign*}
\minimize &f(x) \\
\subjectTo &a(x)  + s = 0 \\
& s \ge 0,
\end{flalign*}
which we can pass to the one-phase solver.

The terms in Figure~\ref{fig:netlib-example} and Table~\ref{nonlinear-table} are given as follows:
\begin{itemize}
\item The infinity norm of the primal residual is given by $\| a(x) + s \|_{\infty}$. 
\item The infinity norm of the dual residual is measured by $\| \grad \Lag(x,y) \|_{\infty}$.
\item The infinity norm of complementarity is given by $\max_i s_i y_i$.
\item We measure strict complementarity by $\min_i s_i + y_i$.
\end{itemize}
The optimality termination criterion of the one-phase IPM is
$$
\max \left\{ \frac{100}{\max\{ \| y \|_{\infty}, 100 \}} \max\{ \| \grad_{x} \Lag(x,y) \|_{\infty}, \| Sy \|_{\infty} \},  \| a(x) + s \|_{\infty} \right\} \le 10^{-6}.
$$
For more details on the one-phase IPM see the paper \cite{hinder2018one} and code (\url{https://github.com/ohinder/OnePhase.jl}). The linear solver used was the default Julia Cholesky factorization (SuiteSparse).

\paragraph{IPOPT.} We use IPOPT 3.12.4 with the linear solver MUMPS. Given any generic nonlinear problem, IPOPT rewrites it in the form (by adding slacks to inequalities, see \cite{wachter2006implementation})
\begin{flalign*}
\minimize &f(x) \\
\subjectTo &c(x) = 0 \\
&x_{L} \le x \le x_{U}.
\end{flalign*}
For practical reasons related to the interface we use \cite{dunning2017jump}, we do this reformulation ourselves.
We then measure 
\begin{itemize}
\item Primal feasibility by $\| c(x) \|_{\infty}$.
\item Dual feasibility by $\| \grad f(x) + \grad c(x)^T \lambda - z_{L} + z_{U} \|_{\infty}$, where $z_{L}$ and $z_{U}$ are the dual multipliers corresponding to the constraint $x \ge l$ and $x \le u$ respectively (same notation as in \cite{wachter2006implementation}).
\item Complementarity is given by $\max\{ \max_i((z_{L})_i (x_i - l_i)), \max_i( (z_{U})_i (x_i - u_i)) \}$.
\item We measure strict complementarity by $\min\{ \min_i((z_{L})_i (x_i - l_i)), \min_i((z_{U})_i (x_i - l_i)) \}$.
\end{itemize}

The details of this computation can be found in the file `src/shared.jl' in the function `add\_solver\_results!'.

The options chosen for the solvers are given in Table~\ref{tbl:solver-options}. We turn off the acceptable termination criterion for IPOPT to try to make the termination criterion of the algorithms as similar as possible.

\begin{table}[H]
\begin{tabular}{c c}
\bottomrule
\multicolumn{2}{c}{IPOPT} \\
option & value\\
 \hline
max iter & \maxIter \\
 tol & $ 1.0 \times 10^{-6} $\\
 acceptable\_tol & $ 1.0 \times 10^{-6} $\\
 acceptable\_iter & 99999\\
 acceptable\_compl\_inf\_tol & $ 1.0 \times 10^{-6} $\\
 acceptable\_constr\_viol\_tol & $ 1.0 \times 10^{-6} $\\
  acceptable\_constr\_viol\_tol & $ 1.0 \times 10^{-6} $\\
 bound\_relax\_factor & $0.0$* \\
\bottomrule
\multicolumn{2}{c}{One Phase} \\
option & value\\
 \hline
max\_it & \maxIter \\
tol & $10^{-6}$ \\
\bottomrule
\end{tabular}
\caption{Solver options. The * indicates this option was only changed for `IPOPT w/o perturb'. For `IPOPT w. perturb' this was kept at its default value of $10^{-8}$.}\label{tbl:solver-options}
\end{table}

\subsection{NETLIB LP test details}\label{sec:netlib-details}

The linear programs in the NETLIB linear programming collection come in the form $\min{c^T x}$ s.t. $Ax = b$, $l \le x \le u$. Table~\ref{tbl:solver-failed} shows which solver failed on which problem.

Table~\ref{tbl:strict-interior} shows when there is a feasible solution according to Gurobi when the bound constraints are tightened by $\delta$ i.e. find a solution to the system $Ax = b$ and $u - \delta \ge x  \ge l + \delta$. We tried $\delta = 10^{-4}, 10^{-6}, 10^{-8}$ and obtained the same results with Gurobi's feasibility tolerance set to $10^{-9}$. We found $29$ problems with a feasible solution and $64$ without a feasible solution in the NETLIB collection.
  We used Gurobi version 7.02. 
  
  \begin{table}[H]
  \begin{tabular}{c p{7cm}}
\bottomrule
IPOPT w/o perturb & PEROLD, FFFFF800, SCAGR25, SHELL, SHARE1B, AGG3, VTP-BASE  (7 total) \\
\bottomrule
IPOPT w. perturb & PEROLD, FFFFF800, SCAGR25, SHELL, SHARE1B, VTP-BASE (6 total) \\ 
\bottomrule
One Phase & PEROLD, PILOT4, AGG2, PILOT-WE, GROW15, GROW22 (6 total) \\
\bottomrule
\end{tabular}
  \caption{NETLIB problems where a solver failed}\label{tbl:solver-failed}
 \end{table}
 
 \newpage
  
\begin{table}[H]
  \begin{tabular}{l l l l}
    \hline
    Problem name & strict interior & Problem name & strict interior  \\
\hline
    25FV47 & true & PILOT-JA & false \\
    80BAU3B & false & PILOT-WE & false \\
    ADLITTLE & false & PILOT & false \\
    AFIRO & true & PILOT4 & false \\
    AGG & false & PILOTNOV & false \\
    AGG2 & false & QAP12 & true \\
    AGG3 & false & QAP8 & true \\
    BANDM & false & RECIPELP & false \\
    BEACONFD & false & SC105 & false \\
    BLEND & true & SC205 & false \\
    BNL1 & false & SC50A & false \\
    BNL2 & false & SC50B & false \\
    BOEING1 & false & SCAGR25 & true \\
    BOEING2 & false & SCAGR7 & true \\
    BORE3D & false & SCFXM1 & false \\
    BRANDY & false & SCFXM2 & false \\
    CAPRI & false & SCFXM3 & false \\
    CYCLE & false & SCORPION & false \\
    CZPROB & false & SCRS8 & false \\
    D2Q06C & false & SCSD1 & true \\
    D6CUBE & true & SCSD6 & true \\
    DEGEN2 & false & SCSD8 & true \\
    DEGEN3 & false & SCTAP1 & true \\
    DFL001 & false & SCTAP2 & true \\
    E226 & false & SCTAP3 & true \\
    ETAMACRO & false & SEBA & false \\
    FFFFF800 & false & SHARE1B & true \\
    FINNIS & false & SHARE2B & true \\
    FIT1D & true & SHELL & false \\
    FIT1P & true & SHIP04L & false \\
    FIT2P & true & SHIP04S & false \\
    FORPLAN & false & SHIP08L & false \\
    GANGES & false & SHIP08S & false \\
    GFRD-PNC & false & SHIP12L & false \\
    GREENBEA & false & SHIP12S & false \\
    GREENBEB & false & SIERRA & false \\
    GROW15 & true & STAIR & false \\
    GROW22 & true & STANDATA & false \\
    GROW7 & true & STANDGUB & false \\
    ISRAEL & true & STANDMPS & false \\
    KB2 & true & STOCFOR1 & true \\
    LOTFI & true & STOCFOR2 & true \\
    MAROS & false & TRUSS & true \\
    MODSZK1 & false & VTP-BASE & false \\
    NESM & false & WOOD1P & false \\
    PEROLD & false & WOODW & false \\
    QAP15 & true & & \\
    \hline
  \end{tabular}
    \caption{Problems in NETLIB collection with a strict relative interior}\label{tbl:strict-interior}
\end{table}

\subsection{Additional figures for nonconvex problems}\label{sec:nonconvex-details}

This section gives plots of solver trajectories for the nonconvex problems of Section~\ref{sec:nonconvex-programs}.

\begin{figure}
\centering
\includegraphics[scale=0.52]{\figuresDir{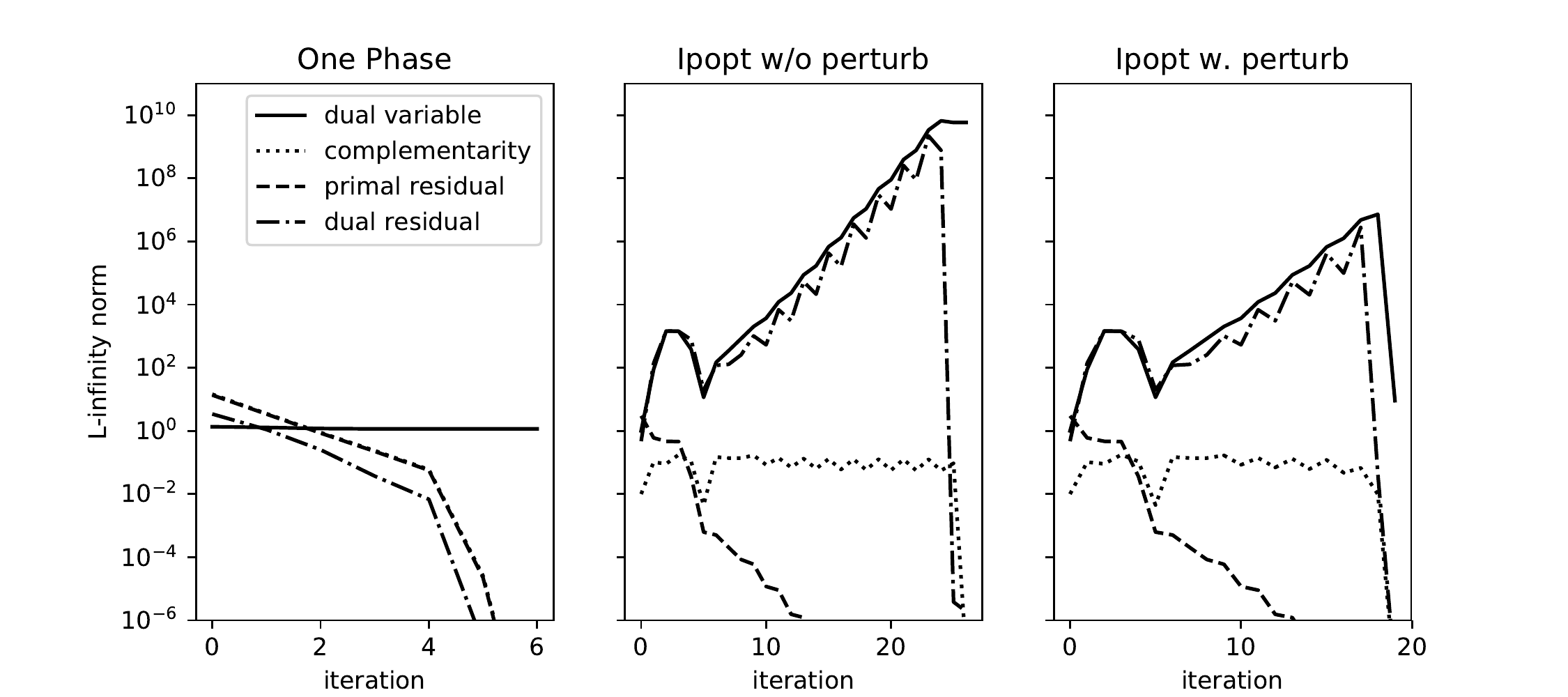}}
\caption{Comparison on the problem of finding the intersection of two circles.}\label{fig:example-circle}
\includegraphics[scale=0.52]{\figuresDir{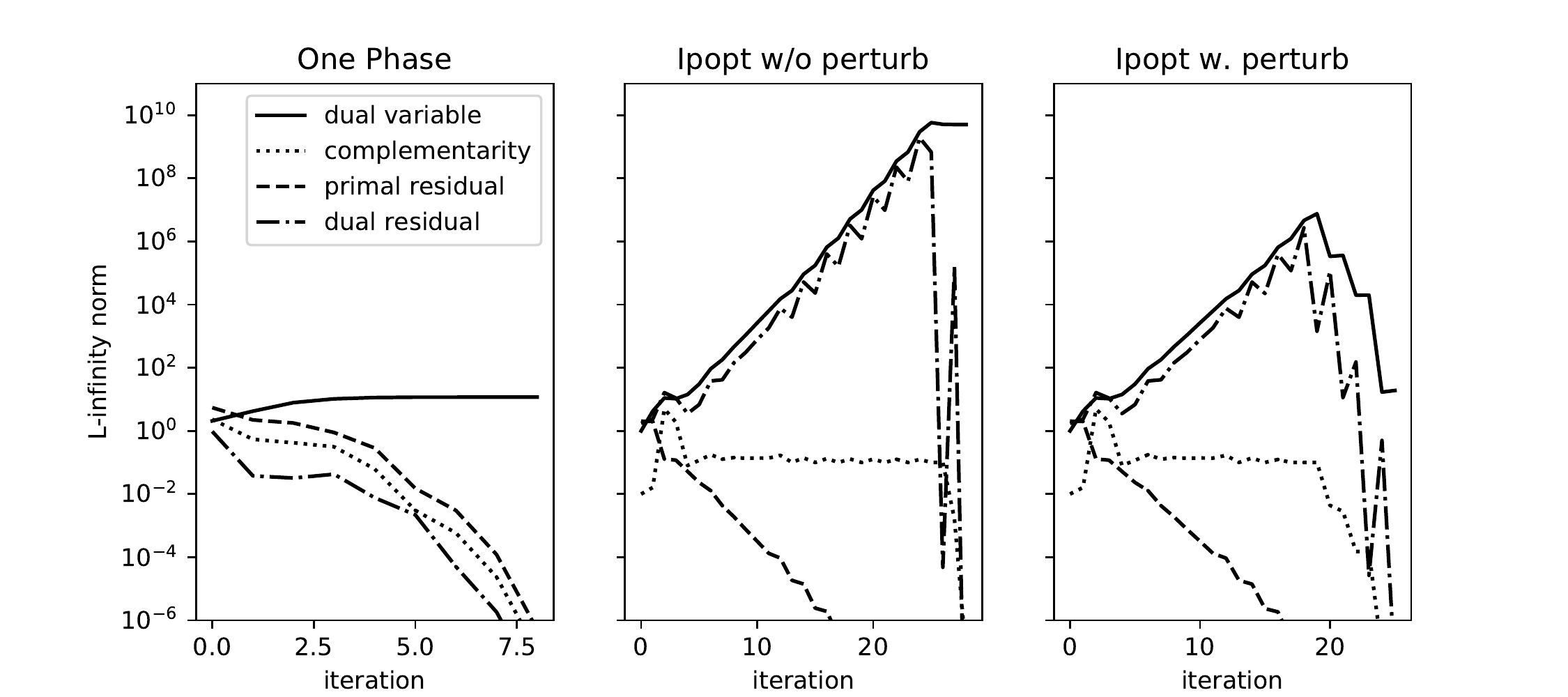}}
\caption{Comparison on a linear program with complementarity constraints.}\label{fig:example-comp}
\includegraphics[scale=0.52]{\figuresDir{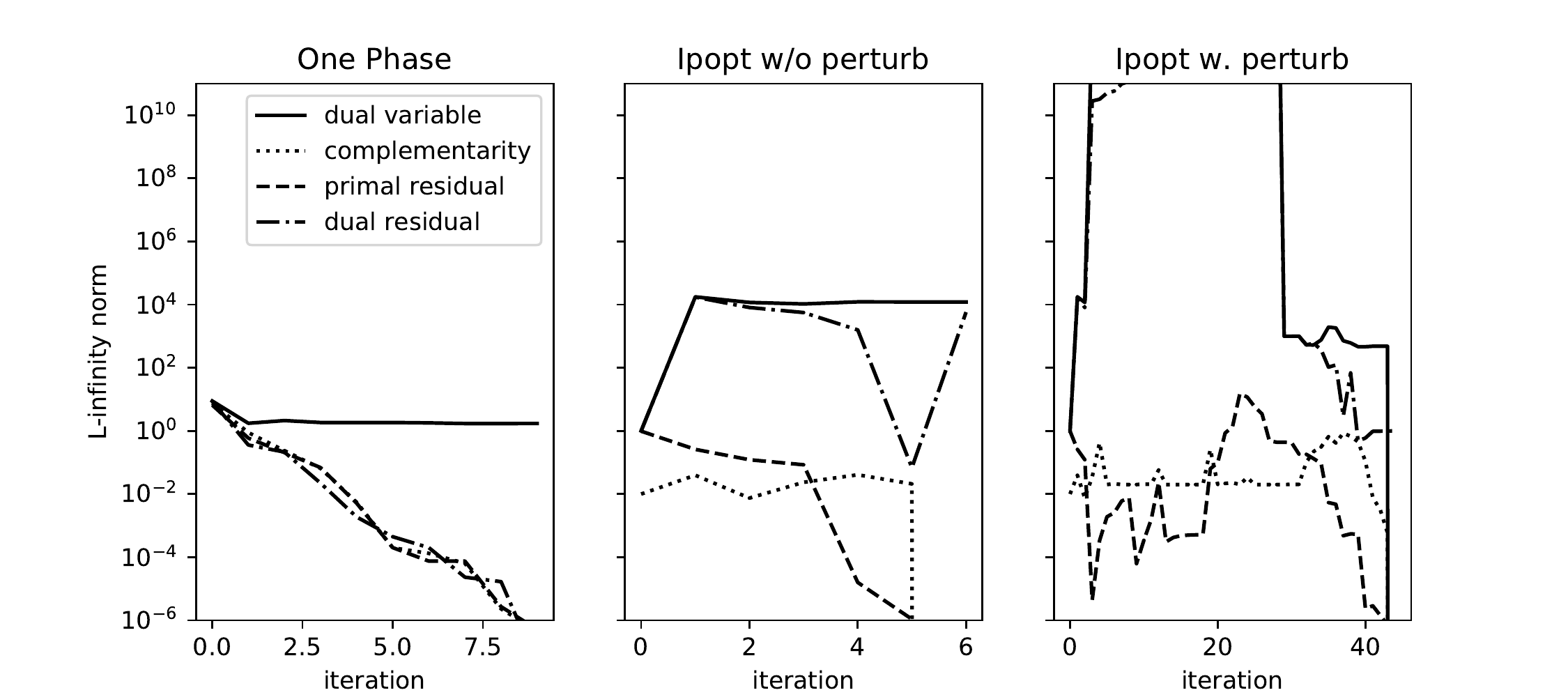}}
\caption{Comparison on a toy drinking water network optimization problem.}\label{fig:example-drink}
\end{figure}

%
%


\end{document}